\DeclareMathAlphabet{\mathcal}{OMS}{cmsy}{m}{n}
\newtheorem{teo}{Theorem}[section]
\newtheorem*{teo*}{Theorem}
\newtheorem{cor}[teo]{Corolary}
\newtheorem{prop}[teo]{Proposition}
\newtheorem{lemma}[teo]{Lemma}
\theoremstyle{remark}
\newtheorem{rmk}[teo]{Remark}
\newcommand{\Id}{{\rm Id}}
\newcommand{\tr}{\mathop{{\rm tr}}}
\newcommand{\adj}{\mathop{\rm adj}}
\newcommand{\vol}{\mathop{\rm vol}}
\renewcommand{\adj}{{\rm adj}\,}
\newcommand{\C}{{\mathbb C}}
\newcommand{\R}{{\mathbb R}}
\newcommand{\E}{{\mathbb E}}
\renewcommand{\S}{{\mathbb S}}
\renewcommand{\imath}{{\bf i}}
\newcommand{\tor}{\xymatrix{\ar@{-->}[r]&}}
\newcommand{\mapstor}{\xymatrix{\ar@{|-->}[r]&}}
\DeclareMathOperator{\Ker}{Ker}
\title{On the logarithmic energy of solutions to the polynomial eigenvalue problem}
\author{Diego Armentano\thanks{Facultad de Ciencias Económicas y de Administración, Universidad de la República, Uruguay. E-mails: \{diego.armentano, federico.carrasco\}@fcea.edu.uy.} \and Federico Carrasco\footnotemark[1]
\and Marcelo Fiori\thanks{Facultad de Ingeniería, Universidad de la República, Uruguay. E-mail: mfiori@fing.edu.uy.}}
\date{}
\begin{document}

\maketitle

\begin{abstract}
   In this paper, we compute the expected logarithmic energy of solutions to the polynomial eigenvalue problem for random matrices. We generalize some known results for the Shub-Smale polynomials, and the spherical ensemble. These two processes are the two extremal particular cases of the polynomial eigenvalue problem, and we prove that the logarithmic energy lies between these two cases. In particular, the roots of the Shub-Smale polynomials are the ones with the lowest logarithmic energy of the family.
\end{abstract}

\noindent \textbf{Keywords:}{Logarithmic Energy, Elliptic Fekete points, Polynomial eigenvalue problem, Random matrices.}

\noindent \textbf{MSC Classification:}{60B20, 60G55, 31C20, 60J45.}

\section{Introduction and Main Result}

The problem of finding configurations of points in the 2-dimensional sphere with {small logarithmic energy} is a very challenging problem, with several applications. It is one of the problems listed by Smale for the XXI Century \cite{smale_problems}, and there have been several advances in different directions related to this problem.

Given $N$ points in $\R^3$, the logarithmic energy of the configuration is defined as
$$V(x_1,\ldots,x_N) = -\sum_{1\leq i < j \leq N} \ln \|x_i-x_j\| .$$
The problem of minimizing this energy in the unit sphere $\S^2$ is considered a very hard optimization problem, also known as the Fekete problem. Not only are the configurations of points that minimize the energy not completely understood even for a small number of points (for instance, $N=7$), but also the asymptotic value of the minimum is not known with enough precision. More precisely, let
$$V_N = \min_{x_1,\ldots,x_N \in \S^2} V(x_1,\ldots,x_N)$$
be the minimum of the energy in the sphere. The 7th Smale problem consists of finding a configuration of points $x_1,\ldots,x_N$ in the sphere, in polynomial time in $N$, such that its logarithmic energy $V(x_1,\ldots,x_N)$ is close enough to the minimum, namely $V(x_1,\ldots,x_N) - V_N \leq c\ln N$, for a universal constant $c$. 

One of the major obstacles is that the value of $V_N$ itself is not known with precision up to the $\ln N$ term, and therefore problem number 7 of Smale's list is still far from being solved.


Indeed, the value of $V_N$ is \cite{betermin2018renormalized}
\begin{equation}\label{minimo_VN}
V_N = \frac{\kappa}{2} N^2 - \frac{N\ln N}{4} + CN + o(N) ,
\end{equation}
where $\kappa = \frac{1}{2}-\ln 2$ and  $C$ is an unknown constant. As far as it is known, this constant $C$ is bounded (lower bound by \cite{Lauritsen,carlos_fatima}, upper bound by \cite{brauchart2012,betermin2018renormalized})
$$-0.028426\ldots = \frac{\ln(2)}{2}-\frac{3}{8}\leq C \leq \ln 2 + \frac{1}{4}\ln \frac{2}{3} + \frac{3}{2}\ln \frac{\sqrt{\pi}}{\Gamma(1/3)} = -0.027802\ldots,$$
and the upper bound is conjectured to be actually the value for $C$ \cite{brauchart2012,betermin2018renormalized}.

Despite the intrinsic difficulties of finding these optimal configurations of points, or even the value of the minimal energy, there have been some very exciting advances throughout the last decades. For instance, the diamond ensemble proposed by Beltrán and Etayo \cite{beltran2018diamond}, which achieves configurations of points with logarithmic energy very close to the conjectured minimum, and two other random processes, which we describe in more detail in what follows.

The first one, proposed by Armentano, Beltrán, and Shub in \cite{ABS}, consists of taking the roots of a random polynomial. Specifically, let \begin{equation}\label{shub_smale_poly}
p_N(z) = \sum_{k=0}^N a_k \binom{N}{k}^{1/2}z^k
\end{equation}
with $a_k$ i.i.d. complex standard Gaussian coefficients $\mathcal{N}_{\C}(0,1)$, i.e., each coefficient is $\alpha + i\beta$, with $\alpha,\beta$ real independent zero-mean and $1/2$ variance Gaussian. Now compute the roots of $p_N$ in $\C$, and project them to $\S^2$ through the stereographic projection.
The authors prove that the expected logarithmic energy of the resulting ensemble in $\S^2$ is
\begin{equation}\label{energia_abs}
\frac{\kappa}{2} N^2 - \frac{N\ln N}{4} - \frac{\kappa}{2} N .
\end{equation}
 Observe that the expression coincides up to the first two terms with \eqref{minimo_VN}, and the constant for the linear term is $-\frac{\kappa}{2}\approx 0.096\ldots$.\\
 More recently, in \cite{Fluctuations} the authors prove a central limit theorem for the logarithmic energy resulting from this random process, {where they show that the fluctuations are of order $\sqrt{N}$, and therefore a typical realization of this process will have energy close to the expression in \eqref{energia_abs}}.

The second approach consists of taking the eigenvalues of a random matrix. Specifically, let $A$ and $B$ be two random matrices with i.i.d. complex standard Gaussian entries $\mathcal{N}_{\C}(0,1)$. Now compute the eigenvalues of the matrix $AB^{-1}$, and project them to $\S^2$ through the stereographic projection. Alishahi and Zamani \cite{alishahi2015spherical} proved that the expected value of the logarithmic energy for this configuration, so-called \emph{spherical ensemble}, is
\begin{equation}\label{energia_AZ}
\frac{\kappa}{2} N^2 - \frac{N\ln N}{4} + \left(\frac{\ln 2}{2} -\frac{\gamma}{4} \right)N -\frac{1}{8} + O\left(\frac{1}{N} \right),
\end{equation}
where $\gamma = 0.57721\ldots$ is the Euler-Mascheroni constant. Observe that the first two terms coincide with the known expression in \eqref{minimo_VN}, and the constant for the linear term is $\left(\frac{\ln 2}{2} -\frac{\gamma}{4} \right) \approx 0.2022\ldots$.

In this paper, we study a strategy for producing configurations of points, for which the last two examples (zeros of random polynomials and eigenvalues of random matrices) are particular cases. Namely, let us consider the following polynomial in $\C$,
$$
F(z)=\det\left(\sum_{i=0}^d G_i{d\choose i}^{1/2}z^i\right),
$$
where each $G_i$ is an $r\times r$ random matrix with independent entries distributed as $\mathcal{N}_\C(0,1)$.
The problem of finding the zeros of this function is known as the \textit{polynomial eigenvalue problem} (PEVP). Observe that $F(z)$ has, generically, $N=dr$ roots in $\C$, which can be projected to the unit sphere through the stereographic projection as before. We will denote the PEVP-ensemble to this configuration of points in $\S^2$.

Now, for a given number of points $N$, one can choose different pairs of its divisors $(d,r)$ forming $N=dr$. Notably, for $r=1$ and $d=N$, we obtain exactly the random polynomials as in \eqref{shub_smale_poly}. In the other extreme, for $r=N$ and $d=1$, we obtain $F(z) = \det(G_0+G_1z)$, whose roots coincide with the eigenvalues of $-G_0G_1^{-1}$, and therefore we recover the spherical ensemble.

Some numerical experiments suggest that the expected logarithmic energy of intermediate instances (meaning $1<d<N$) lies between the energy of the two extremal cases and decreases linearly with $d$. The main result of this paper, which we state below, gives a precise computation of the expected logarithmic energy for the PEVP-ensemble. The numerical experiments, along with the analysis of this dependence on $d$, are presented in Section \ref{sec:discussion}.

\begin{teo}[Main Theorem]\label{main} Let $F(z)$ be the random complex polynomial of degree $N$ defined as 
$$
F(z)=\det\left(\sum_{i=0}^d G_i{d\choose i}^{1/2}z^i\right),
$$
where $G_i$ are $r \times r$ matrices with i.i.d. entries following $\mathcal{N}_\C(0,1)$. Then, with the definitions above, we have
$$
\E(V(x_1,\cdots,x_N))=\frac{\kappa}{2}N^2-\frac{N\log d}{4}-\frac{N}{4}\bigg(1+\psi(r+1)-\psi(2) -2\ln 2\bigg)
$$
where $\psi(n)=\frac{\Gamma'(n)}{\Gamma(n)}$ is the digamma function, i.e., the logarithmic derivative of the gamma function $\Gamma(n)$.
\end{teo}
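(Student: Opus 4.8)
The plan is to reduce the spherical energy to two planar sums and then attack each one. Writing $z_1,\dots,z_N$ for the roots of $F$ and using the chordal formula $\|x_i-x_j\| = 2|z_i-z_j|/\sqrt{(1+|z_i|^2)(1+|z_j|^2)}$ for the stereographic projection onto the unit sphere, the energy becomes
$$V = -\binom{N}{2}\ln 2 \;-\; E_1 \;+\; \frac{N-1}{2}\,E_2, \qquad E_1 = \sum_{i<j}\ln|z_i-z_j|,\quad E_2 = \sum_{i=1}^N \ln(1+|z_i|^2).$$
Everything thus reduces to $\E[E_1]$ and $\E[E_2]$. For $E_2$ I would show that the expected counting measure of the zeros of $F=\det M$, where $M(z)=\sum_i G_i\binom{d}{i}^{1/2}z^i$, is the uniform measure on $\S^2$. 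The point is that each entry of $M$ is an independent Shub--Smale field with covariance $(1+z\bar w)^d$, and this kernel is invariant (up to a deterministic scalar cocycle) under the M\"obius maps induced by rotations of the sphere; hence $\det M$ has a rotation-invariant zero distribution, which must be uniform. Since $\int_\C \ln(1+|z|^2)\,\frac{dA(z)}{\pi(1+|z|^2)^2} = 1$, this gives $\E[E_2]=N$.

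For $E_1$ the main identity is that, since $F(z)=\det(G_d)\prod_j(z-z_j)$, one has $F'(z_i)=\det(G_d)\prod_{j\neq i}(z_i-z_j)$, whence
$$E_1 = \tfrac12\sum_{i=1}^N \ln|F'(z_i)| \;-\; \tfrac{N}{2}\ln|\det G_d|.$$
The term $\E[\ln|\det G_d|]$ is a classical Ginibre computation: by Gram--Schmidt $|\det G_d|^2$ is distributed as a product of independent Gamma variables $\prod_{m=1}^r \Gamma(m,1)$, giving $\E[\ln|\det G_d|]=\tfrac12\sum_{m=1}^r\psi(m)$. The heart of the matter is $\E\big[\sum_i \ln|F'(z_i)|\big]$, which I would evaluate by the Kac--Rice formula
$$\E\Big[\sum_{F(z)=0}\ln|F'(z)|\Big] = \int_\C \E\big[\ln|F'(z)|\,|F'(z)|^2 \,\big|\, F(z)=0\big]\,p_{F(z)}(0)\,dA(z).$$

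Using rotational invariance once more, this integral reduces to a local computation at $z=0$, modulated by a factor producing $\ln(1+|z|^2)$ that integrates against the uniform measure. At the origin $M(0)=G_0$ and $M'(0)=\sqrt d\,G_1$ are independent Ginibre matrices, and on the event $\det G_0=0$ (generically rank $r-1$) Jacobi's formula yields $F'(0)=\tr(\adj(G_0)M'(0))=\sqrt d\,\sigma\,u^*G_1 v$, where $u,v$ are the left/right null vectors of $G_0$ and $\sigma$ is the product of its nonzero singular values. Since $G_1$ is independent Gaussian, $u^*G_1 v$ is a standard complex Gaussian $g$, so $\ln|F'(0)| = \tfrac12\ln d + \ln\sigma + \ln|g|$; the term $\tfrac12\ln d$ is exactly where the $-\tfrac14 N\ln d$ in the statement originates. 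The main obstacle is the conditional expectation $\E[\ln\sigma \mid \det G_0=0]$ under the Kac--Rice reweighting: one must understand the law of the pseudo-determinant of a complex Ginibre matrix conditioned to be rank-deficient, \emph{size-biased} by the Jacobian $|F'(0)|^2=d\,\sigma^2|g|^2$ together with the density $p_{F(0)}(0)$. I expect this biased law to again decompose into products of Gamma variables, so that $\E[\ln\sigma]$ contributes a sum of digamma values. Collecting the three ingredients --- the uniform $\E[E_2]=N$, the Ginibre term $\tfrac12\sum_m\psi(m)$, and the conditional $\sigma$-term --- and applying the recurrence $\psi(k+1)=\psi(k)+1/k$, the various partial sums should telescope to the single combination $\psi(r+1)-\psi(2)$ in the statement. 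As a consistency check, the extremal cases $r=1$ (where $\psi(r+1)-\psi(2)=0$) and $d=1$ (where $\psi(r+1)=\psi(N+1)$) must recover exactly \eqref{energia_abs} and \eqref{energia_AZ}.
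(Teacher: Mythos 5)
Your overall architecture coincides with the paper's: the chordal-distance expansion you derive is exactly the decomposition into the three terms of \eqref{tres_terminos} (your identity $E_1=\tfrac12\sum_i\ln|F'(z_i)|-\tfrac N2\ln|\det G_d|$ is \cite[Proposition 1]{ABS}), your treatment of $E_2$ via rotation invariance matches the paper's first-intensity computation, your Gram--Schmidt/Gamma-product evaluation of $\E\ln|\det G_d|$ is the paper's Lemma \ref{explogdet}, and the weighted Kac--Rice formula is the same starting point for the main term. However, the proposal has a genuine gap: the two computations on which the whole result rests are conjectured, not performed. First, the density $p_{F(z)}(0)$ of $\det(A(z))$ at $0$ is never evaluated. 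This is not a routine step: the paper obtains $P_{F(z)}(0)=\frac{1}{\pi(1+|z|^2)^N\Gamma(r)}$ (Corollary \ref{densF}, Proposition \ref{densFZ}) by applying the coarea formula to the determinant map restricted to the smooth part of the variety $\Sigma=\{\det=0\}$, computing the normal Jacobian $NJ_{\det}(A)=\|\adj(A)\|_F^2$, and fibering $\hat\Sigma$ over $\mathbb{P}(\C^r)$; the factor $\Gamma(r)$ produced here is essential to the final constant. Second, the size-biased conditional law you describe (the pseudo-determinant $\sigma$ of a rank-deficient Ginibre matrix, weighted by $|F'|^2$) is exactly what must be computed, and ``I expect this biased law to decompose into products of Gamma variables so that the sums telescope'' states the goal rather than proving it. Making the conditioning on the measure-zero event $\{\det G_0=0\}$ rigorous inside Kac--Rice requires precisely the paper's Lemma \ref{eqcondexps} (conditioning on $\det A=0$ equals conditioning on the last column vanishing, for right-unitarily invariant functionals --- again proved via the coarea formula), followed by the exact evaluation $\E\left(|\det A|^2\log|\det A|\right)=\frac{r!}{2}\left((r+1)\psi(r+2)-r-\psi(2)\right)$ and Corollary \ref{corexpdet}. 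Without these closed forms you cannot extract the combination $\psi(r+1)-\psi(2)$; the cancellation is not automatic and the paper needs the identities $\sum_{i=1}^r\psi(i)=r(\psi(r+1)-1)$ and $n\psi(n+1)=n\psi(n)+1$ at specific places to produce it.

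A further, smaller gap is the claimed reduction of the Kac--Rice integral to $z=0$ by rotational invariance. The functional $\sum_i\ln|F'(z_i)|$ is \emph{not} invariant under the M\"obius maps induced by rotations: the derivative picks up a conformal cocycle, so the integrand at general $z$ differs from the one at $0$ by factors that must be tracked exactly (they produce the $(1+|z|^2)^{N-2}$ inside the logarithm in Proposition \ref{condexp}, which after integration yields the $N-2$ in the final count). Your phrase ``modulated by a factor producing $\ln(1+|z|^2)$'' acknowledges this but does not derive it. The paper sidesteps the issue by working at every $z$ directly, at the cost of a Gaussian regression $A_r'(z)=\eta(z)+\beta A_r(z)$ to decouple $A'(z)$ from $A(z)$ (these are correlated for $z\neq 0$, unlike the independent pair $G_0,G_1$ you use at the origin); this regression step, with $\E(\eta(z)\eta(z)^*)=d(1+|z|^2)^{d-2}\Id_r$, is also absent from your proposal and is where the $\log d$ term actually enters at general $z$. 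In short: correct skeleton, correct identification of the objects to compute, but the decisive technical content --- the coarea-formula density, the rigorous conditioning lemma, and the exact weighted determinant moments --- is missing.
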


(See Section \ref{proofmain} for the proof.)

\medskip

Observe that this result generalizes the computed expected logarithmic energies of the ensembles by \cite{alishahi2015spherical} and \cite{ABS}.
Moreover, for $r=N, d=1$, we get


\begin{eqnarray*}
\E(V(x_1,\cdots,x_N)) &=&\frac{\kappa}{2}N^2-\frac{N\psi(N+1)}{4}+N\left(\frac{\ln 2}{2} -\frac{\gamma}{4}\right).
\end{eqnarray*}

This is actually the exact value for the expected value of the spherical ensemble, which, to the best of our knowledge, had not been computed before. Using the usual approximation of $\psi(N+1)$, we obtain the same asymptotic expression as in \eqref{energia_AZ}. A more detailed asymptotic analysis of this expression is given in Section \ref{sec:discussion}.


\begin{rmk}\label{invariant_ortho}
Observe that in the matrix $\sum_{i=0}^d G_i{d\choose i}^{1/2}z^i$, each entry is a Shub-Smale polynomial, like the ones in \eqref{shub_smale_poly}. The distribution of the roots of these polynomials, projected to $\S^2$, are invariant under the orthogonal group. Now, since the determinant is a homogeneous polynomial in the entries of the matrix, the zeros of the resulting process $F(z)$ are also invariant under the same group of isometries (Proposition 2.1.1 of \cite{krishnaphd}). This invariance in $\S^2$ is a desirable property if we want the resulting points of a process in which every configuration is possible to be well distributed in the sphere.
\end{rmk}

The rest of the paper is organized as follows. The proof of the main theorem reduces to compute the expectation of the three terms in \eqref{tres_terminos}, which present uneven levels of difficulties. Indeed, the results of the first and last terms follow from relatively simple random matrix computations, while the second one requires more tools.

In order to ease the notation, and also to state more general results that can be useful in other contexts, we first state and prove some tools in the framework of random matrices in Section \ref{sec:techincal_tools}. Then, in Section \ref{computations}, we instantiate some of these results to obtain the expectation of the two most straightforward terms and use the weighted Kac-Rice formula, along with other tools, to compute the remaining expectation. We aggregate these computations in Section \ref{proofmain} to complete the proof of the main theorem. We conclude in Section \ref{sec:discussion} with some experimental results and a discussion on the dependence on $d$ of the expected logarithmic energy.

\section{Technical tools and proof of Main Theorem}

In what follows, we compute the expected logarithmic energy of the PEVP-ensemble, but instead of doing so in the unit sphere, we will derive the expression through the Riemann sphere (i.e., the sphere of radius $1/2$ centered at $(0,0,1/2)$ as in \cite{B3}). Throughout the paper, we will denote as $z_1,\ldots,z_N$ the roots of $F(z)$ in $\C$, and as $\hat z_1,\ldots, \hat z_N$ their stereographic projections onto the Riemann sphere. We can then transform this configuration to the unit sphere $\S^2$ via $$(a,b,c) \to (2a,2b,2c-1).$$

Given a configuration of points $\hat z_1,\ldots,\hat z_N$ in the Riemann sphere, it is easy to see that the logarithmic energies of this configuration and the corresponding configuration in $\S^2$ are related as
\begin{equation}\label{exps2}
V(x_1,\cdots,x_N) = V(\hat z_1,\cdots,\hat z_N) - \frac{N(N-1)}{2}\ln 2.
\end{equation}

The goal is then to compute $\E(V(\hat z_1,\cdots,\hat z_N))$. Following Armentano et al. \cite[Proposition 1]{ABS}, since $F(z)$ is a polynomial, the logarithmic energy can be decomposed as follows:
$$
V(\hat z_1,\cdots,\hat z_N)=(N-1)\sum_{i=1}^{N}\log\sqrt{1+|z_i|^2}-\frac{1}{2}\sum_{i=1}^{N}\log|F'(z_i)|+\frac{N}{2}\log|a_N|,
$$
where $a_N$ is the leading coefficient of $F(z)$. It is easy to see that this leading coefficient is $\det(G_d)$.

Then, taking expectation we get that $\mathbb E(V(\hat z_1,\cdots,\hat z_N))$ is equal to
\begin{equation}\label{tres_terminos}
(N-1)\mathbb E\left(\sum_{i=1}^{N}\log\sqrt{1+|z_i|^2}\right)-\frac{1}{2}\mathbb E\left(\sum_{i=1}^{N}\log|F'(z_i)|\right)+\frac{N}{2}\mathbb E(\log|\det(G_d)|).  
\end{equation}
By computing the expectation of the three terms in the previous expression, we obtain the main result of the paper.

In order to do so, we will first prove some useful results in the following part, and then compute the mentioned expectations in Section \ref{computations}.

\subsection{Technical tools}
\label{sec:techincal_tools}
This section is dedicated to presenting (and proving) the technical tools (and results) necessary for the computations. For those eager to understand the underlying concepts without delving into the technical details, we refer them to Section \ref{computations}.


\subsubsection{Geometric and probabilistic tools}

Firstly, let us recall the coarea formula and derive some known results for random variables.

\medskip

Suppose $\varphi:M\to N$ is a surjective map from a Riemannian manifold $M$ to a Riemannian manifold $N$, whose derivative $D\varphi(x):T_xM\to T_{\varphi(x)}N$ is surjective for almost all $x\in M$.
The horizontal space $H_x\subset T_xM$ is defined as the orthogonal complement of $\Ker D\varphi(x)$.
The horizontal derivative of $\varphi$ at $x$ is the restriction of $D\varphi(x)$ to $H_x$.
The \textit{Normal Jacobian} $NJ_{\varphi}(x)$ is the absolute value of the determinant of the horizontal derivative, defined almost everywhere on $M$.

\begin{teo*}[\cite{BCSS} {p. 241} The coarea formula]\label{coarea}
  Let $M,N$ be Riemannian manifolds of respective dimension $m\geq n$, and let $\varphi:M\to N$ be a smooth surjective map, whose derivative $D\varphi(x):T_xM\to T_{\varphi(x)}N$ is surjective for almost all $x\in M$.
  Then, for any positive measurable function, $\Phi:M\to[0,+\infty)$ we have
  \[
  \int_{x\in M}\Phi(x)dM=\int_{y\in N}\int_{x\in\varphi^{-1}(y)}\frac{\Phi(x)}{NJ_\varphi(x)}d\varphi^{-1}(y)dN
  \]
  and
  \[
  \int_{x\in M}NJ_\varphi(x)\Phi(x)dM=\int_{y\in N}\int_{x\in\varphi^{-1}(y)}\Phi(x)d\varphi^{-1}(y)dN,
  \]
  where $d\varphi^{-1}(y)$ is the induced volume measure in the manifold $\varphi^{-1}(y)$.
\end{teo*}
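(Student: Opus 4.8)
The plan is to prove the coarea formula by the classical route: reduce the identity to a pointwise linear-algebraic fact, establish it in a single pair of adapted charts, and then globalize by a partition of unity. Since both displayed identities are additive and monotone in $\Phi$, and the second follows from the first merely by substituting $\Phi\mapsto NJ_\varphi\cdot\Phi$, it suffices to prove the first identity for nonnegative $\Phi$ supported in a small coordinate neighborhood.

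First I would dispose of the singular locus. The set where $D\varphi(x)$ fails to be surjective has measure zero in $M$ by hypothesis, and its image has measure zero in $N$ by Sard's theorem; since $NJ_\varphi$ is defined almost everywhere and the fibers over a full-measure set of regular values are smooth submanifolds of dimension $m-n$, neither integral is affected by excising these negligible sets. Working over regular points, the submersion normal form (constant rank theorem) provides, around almost every $x$, coordinates on $M$ and on $N$ in which $\varphi$ becomes the standard projection $\pi(u_1,\ldots,u_m)=(u_1,\ldots,u_n)$.

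The heart of the argument is the linear model. For a surjective linear map $L\colon\R^m\to\R^n$ between inner-product spaces, decompose $\R^m=H\oplus\Ker L$ orthogonally, where $H=(\Ker L)^\perp$ is the horizontal space; then $L|_H\colon H\to\R^n$ is an isomorphism with $|\det(L|_H)|=NJ_L=\sqrt{\det(LL^{*})}$, as one sees at once from the singular value decomposition. Writing $x=h+k$ with $h\in H$, $k\in\Ker L$, Fubini gives
$$\int_{\R^m}\Phi\,dx=\int_{H}\int_{\Ker L}\Phi(h+k)\,dk\,dh,$$
and the change of variables $y=Lh$ on $H$ turns $dh$ into $dy/NJ_L$, so that the inner integral becomes exactly the integral over the fiber $\varphi^{-1}(y)$ with its induced measure. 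This yields the formula with the constant factor $1/NJ_L$.

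To pass from $L=D\varphi(x)$ to $\varphi$ itself, I would compare the Riemannian volume densities in the adapted charts with Lebesgue measure: $dM=\sqrt{\det g_M}\,du$, $dN=\sqrt{\det g_N}\,dy$, and the induced measure on each fiber carries its own density. The main obstacle, and the step requiring the most care, is verifying that these three densities combine so that their pointwise ratio is precisely $NJ_\varphi(x)$, i.e. that the Riemannian volume disintegrates as $dM=NJ_\varphi(x)^{-1}\,d\varphi^{-1}(y)\,dN$. This amounts to the claim that the normal Jacobian, computed via the \emph{metric} inner products on $T_xM$ and $T_{\varphi(x)}N$ rather than via the coordinate Euclidean structure, is exactly the Jacobian factor of the disintegration; it follows by applying the linear identity above to $L=D\varphi(x)$ equipped with the metric inner products and checking that the metric normalizations agree. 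Integrating the resulting pointwise identity over the chart and summing over a partition of unity subordinate to a cover by such charts then gives the global formula.
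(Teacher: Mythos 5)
This statement is not proved in the paper at all: it is quoted as a known result, with a citation to [BCSS, p.~241], and then used as a black box. So the relevant question is only whether your self-contained argument is sound, and in outline it is. Your route --- excise the critical locus via the hypothesis plus Sard's theorem, put $\varphi$ in submersion normal form, prove the identity for a linear surjection $L\colon\R^m\to\R^n$ by orthogonal splitting, Fubini, and the change of variables $y=Lh$ with Jacobian $NJ_L=\sqrt{\det(LL^{*})}$, then globalize by a partition of unity --- is the classical proof, and it is essentially how the cited reference itself establishes the formula, so you have in effect reconstructed the omitted proof rather than found a different one. Two points deserve more than the one sentence you give them. First, the ``step requiring the most care'' is a concrete Gram-determinant identity: in adapted coordinates $(u',u'')$ with $\varphi=\pi$, the fiber tangent is the span of the $\partial/\partial u''$ and the horizontal space $H_x$ is its \emph{metric} orthogonal complement (not the coordinate span of the $\partial/\partial u'$), and one must check $\sqrt{\det g_M}=NJ_\varphi^{-1}\sqrt{\det g_{\mathrm{fib}}}\,\sqrt{\det g_N\circ\varphi}$; this follows from the factorization of the volume form through the quotient $T_xM/\Ker D\varphi(x)\cong H_x$, and writing it out would close the only real gap in your sketch. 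Second, the deduction of the second displayed identity from the first by the substitution $\Phi\mapsto NJ_\varphi\Phi$ needs $NJ_\varphi>0$ on the fibers appearing on the right, which holds because almost every $y$ is a regular value (Sard again), so its fiber contains no critical points; also, measurability of $y\mapsto\int_{\varphi^{-1}(y)}\Phi/NJ_\varphi$ should be noted as part of the Fubini conclusion in each chart. With these routine details filled in, your proof is correct.
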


\begin{lemma}
    Let $X(\cdot):\C^D \to\C^d$ be a random field with density $\rho_X(x)$, and $\varphi:\C^d\to \C^l$ be a smooth surjective map, whose derivative $D\varphi(x):\C^d\to \C^l$ is surjective for almost all $x\in \C^d$. 
    Then, the {pointwise density} of the random field $Y(\cdot):\C^D\to\C^l$ defined by $Y(\cdot)=\varphi(X(\cdot))$ is
    \[
    \rho_Y(y)=\int_{\varphi^{-1}(y)}\rho_X(x)\frac{1}{NJ_{\varphi}(x)}d\varphi^{-1}(y).
    \]
\end{lemma}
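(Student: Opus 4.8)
The plan is to characterize $\rho_Y$ weakly: rather than manipulating densities directly, I would verify that the claimed expression is the density of $Y(\cdot)=\varphi(X(\cdot))$ by integrating against an arbitrary nonnegative measurable test function $g:\C^l\to[0,\infty)$ and checking that it reproduces $\E\big[g(Y)\big]$. Since the statement concerns the pointwise density of the field, it suffices to work at a single fixed point, where $X$ is simply a $\C^d$-valued random variable with density $\rho_X$ and $Y=\varphi(X)$; the ambient parameter space $\C^D$ plays no role in the computation. Thus I would begin from the definition of the density of $X$,
$$
\E\big[g(Y)\big]=\E\big[g(\varphi(X))\big]=\int_{\C^d}g(\varphi(x))\,\rho_X(x)\,dx,
$$
and rewrite the right-hand side via the coarea formula.

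Next I would apply the first identity of the coarea formula to the smooth surjective map $\varphi:\C^d\to\C^l$, viewed as a map between real Riemannian manifolds of dimensions $2d\geq 2l$, taking as integrand the positive measurable function $\Phi(x)=g(\varphi(x))\,\rho_X(x)$. This gives
$$
\int_{\C^d}g(\varphi(x))\,\rho_X(x)\,dx=\int_{\C^l}\int_{\varphi^{-1}(y)}\frac{g(\varphi(x))\,\rho_X(x)}{NJ_\varphi(x)}\,d\varphi^{-1}(y)\,dy.
$$
On the fiber $\varphi^{-1}(y)$ one has $\varphi(x)=y$, so the factor $g(\varphi(x))=g(y)$ is constant along the inner integral and can be pulled out, yielding
$$
\E\big[g(Y)\big]=\int_{\C^l}g(y)\left(\int_{\varphi^{-1}(y)}\frac{\rho_X(x)}{NJ_\varphi(x)}\,d\varphi^{-1}(y)\right)dy.
$$
Because this equality holds for every nonnegative measurable $g$, the quantity in parentheses is exactly the density of $Y$, which is the asserted formula.

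I do not expect any conceptual obstacle here; the delicacy is entirely in the bookkeeping. The two points I would be careful about are, first, confirming that the hypotheses of the coarea formula are genuinely in force — surjectivity of $\varphi$ and of $D\varphi(x)$ almost everywhere is assumed, and $\Phi\geq 0$ is automatic once $g\geq 0$ — and, second, the correct interpretation of the Normal Jacobian $NJ_\varphi$ when passing from the complex description of the spaces to their underlying real manifolds, so that the $NJ_\varphi$ appearing in the coarea formula is the same one that appears in the statement. Restricting to $g\geq 0$ is harmless and in fact preferable, since nonnegative test functions already determine the measure uniquely, so no splitting into positive and negative parts is required; the whole argument is then a single application of the coarea formula followed by the trivial observation that $\varphi$ is constant on its fibers.
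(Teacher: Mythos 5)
Your proof is correct and is essentially the paper's argument: the paper also verifies the formula by applying the coarea formula to $\Phi(x)=\rho_X(x)$ restricted to $\varphi^{-1}(B)$ for an arbitrary Borel set $B\subset\C^l$, which is the indicator-function special case of your test-function computation. The only cosmetic difference is that you integrate against general nonnegative measurable $g$ rather than against $\mathbbm{1}_B$, and your write-up spells out the fiber-constancy step that the paper leaves implicit.
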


\begin{proof}
    It follows from the coarea formula, applied to the fact that
    \[
    \mathbb P(Y\in B)=\mathbb P\left(X\in \varphi^{-1}(B)\right)=\int_{\varphi^{-1}(B)}\rho_X(x)dx,
    \] for every $B\subset \R^l$ Borel subset.
\end{proof}

As a direct application, we get the following result.

\begin{cor}\label{densF}
Let $A\in\C^{r\times r}$ be a random matrix with i.i.d. centered Gaussian entries $\mathcal N_{\C}(0,\sigma^2)$.
Then, the density at $0$ of the random variable $\det(A)$ is equal to
$$
\rho_{\det(A)}(0)=\frac{1}{\pi \sigma^{2r}\Gamma(r)}. 
$$
\end{cor}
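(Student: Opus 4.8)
The plan is to reduce the $r\times r$ problem to a one–dimensional Gaussian computation by conditioning on all but the last column of $A$, and then to apply the preceding lemma to a single complex–linear functional. Write $A=[B\mid v]$, where $B\in\C^{r\times(r-1)}$ collects the first $r-1$ columns and $v\in\C^r$ is the last column. Cofactor expansion along the last column gives $\det(A)=\sum_{i=1}^r c_i v_i = c^\top v$, where the entries of $c=(c_1,\dots,c_r)$ are the signed maximal minors of $B$ and depend only on $B$. The key algebraic fact I would use is the complex Cauchy--Binet identity $\|c\|^2=\sum_i |c_i|^2=\det(B^*B)$, so that $\|c\|^2$ is exactly the Gram determinant of the columns of $B$.

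First I would condition on $B$. Then $v\sim\mathcal{N}_\C(0,\sigma^2 I_r)$ is independent of $c$, and $\det(A)=c^\top v$ is the image of $v$ under the complex-linear map $\ell(v)=c^\top v:\C^r\to\C$. Applying the lemma with $X=v$ and $\varphi=\ell$, whose normal Jacobian is the constant $\|c\|^2$ (for a surjective complex-linear $L$ one has $NJ_L=\det(LL^*)$, here $\ell\ell^*=\|c\|^2$), yields
$$
\rho_{\det(A)\mid B}(0)=\frac{1}{\|c\|^2}\int_{\ell^{-1}(0)}\rho_v(v)\,d\ell^{-1}(0)=\frac{1}{\pi\sigma^2\|c\|^2},
$$
the last equality because integrating the Gaussian density of $v$ over the complex hyperplane $\{c^\top v=0\}$ leaves precisely the value at $0$ of a one-dimensional $\mathcal{N}_\C(0,\sigma^2)$ density. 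Equivalently, conditionally on $B$ the variable $\det(A)$ is a centered complex Gaussian of variance $\sigma^2\|c\|^2$. Taking expectation over $B$ and using $\|c\|^2=\det(B^*B)$ gives
$$
\rho_{\det(A)}(0)=\frac{1}{\pi\sigma^2}\,\E\!\left[\frac{1}{\det(B^*B)}\right].
$$

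It then remains to evaluate the negative moment of the Gram determinant, and I would do this by Gram--Schmidt: $\det(B^*B)=\prod_{k=1}^{r-1} d_k^2$, where $d_k$ is the distance from the $k$-th column of $B$ to the span of the previous ones. Since the columns are i.i.d.\ $\mathcal{N}_\C(0,\sigma^2 I_r)$, the $d_k^2$ are independent and $d_k^2/\sigma^2$ is the squared norm of the projection of a standard complex Gaussian onto an $(r-k+1)$-dimensional subspace, hence $\mathrm{Gamma}(r-k+1,1)$. Using that $X_s\sim\mathrm{Gamma}(s,1)$ satisfies $\E[X_s^{-1}]=\tfrac{1}{s-1}$ for the shapes $s=2,\dots,r$, the reciprocal expectations multiply to $\prod_{j=1}^{r-1}\tfrac1j=\tfrac{1}{\Gamma(r)}$, and restoring the scale gives $\E[\det(B^*B)^{-1}]=\sigma^{-2(r-1)}/\Gamma(r)$. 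Substituting back produces $\rho_{\det(A)}(0)=\frac{1}{\pi\sigma^{2r}\Gamma(r)}$, as claimed.

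The only genuinely non-trivial step is this last negative-moment computation; everything else is bookkeeping. The hard part will be that naive integration of $\det(B^*B)^{-1}$ against the Wishart density is awkward, so the crux is to recognize the Gram--Schmidt factorization into independent gamma variables (equivalently, the triangular factorization of $A$), which both trivializes the moment and makes the appearance of $\Gamma(r)$ transparent. As a sanity check one could instead apply the lemma globally to $\varphi=\det$ on $\C^{r\times r}$, whose normal Jacobian is $\|\adj A\|^2$, but the resulting integral over the singular variety is far less tractable, which is why I favor the column-conditioning route.
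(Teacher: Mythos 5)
Your proof is correct, but it follows a genuinely different route from the paper. The paper does exactly what you dismiss in your last sentence as ``far less tractable'': it applies the pushforward lemma to $\det$ on all of $\C^{r\times r}$, with $NJ_{\det}(A)=\|\adj(A)\|_F^2$, and then tames the resulting integral over the singular variety $\Sigma$ by fibering its smooth part over $\mathbb{P}(\C^r)$ via the kernel map $A\mapsto \ker A$; the normal Jacobian of that fibration turns out to be exactly $1/\|\adj(A)\|_F^2$, so the adjugate factors cancel, and what remains is the volume $\pi^{r-1}/\Gamma(r)$ of $\mathbb{P}(\C^r)$ times a plain Gaussian integral over $\C^{r\times(r-1)}$. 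Your argument replaces all of that geometry (tangent space of $\hat\Sigma$, Moore--Penrose pseudo-inverse, normal Jacobian cancellation) with probability: conditioning on $B$, cofactor expansion plus Cauchy--Binet gives $\det(A)\mid B\sim\mathcal{N}_\C\bigl(0,\sigma^2\det(B^*B)\bigr)$, and the negative moment $\E[\det(B^*B)^{-1}]=\sigma^{-2(r-1)}/\Gamma(r)$ follows from the Gram--Schmidt factorization into independent $\mathrm{Gamma}(r-k+1,1)$ variables (shapes $2,\dots,r$, so all reciprocal moments are finite) --- which is, incidentally, the same iterated-volume trick the paper itself borrows from Aza\"{\i}s--Wschebor for Lemma \ref{explogdet}. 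What each approach buys: yours is more elementary and self-contained, needing only the one-dimensional Gaussian density and the Bartlett-type decomposition; the paper's kernel fibration is heavier but is not wasted effort, since the same map $\varphi$ and the same cancellation are reused verbatim to prove Lemma \ref{eqcondexps}, the conditional-expectation identity that drives the Kac--Rice computation. The one point to state explicitly in your write-up is the disintegration step $\rho_{\det(A)}(0)=\E_B\bigl[\rho_{\det(A)\mid B}(0)\bigr]$ (Fubini for nonnegative integrands, plus dominated convergence to identify the value at $0$ of the continuous version of the density); it is routine, but it is the hinge on which the conditioning argument turns.
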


\begin{proof}
    Using the previous lemma, it follows that
    $$
    \rho_{\det(A)}(0)=\frac{1}{\pi^{r^2}\sigma^{2r^2}}\int_{\Sigma}e^{\frac{-\|A\|^2}{\sigma^2}}\frac{1}{NJ_{\det}(A)}d\Sigma
    $$
    where $\Sigma=\{A\in\C^{r\times r} \ : \ \det(A)=0\}$.

    Following the Jacobi's formula, the derivative of the determinant at $A$ in the direction $\dot A$ is equal to $\tr(\adj(A)\dot A)$, where the adjugate matrix of $A$, $\adj(A)$, is the transpose of the cofactor matrix.
    
    After a routine computation, it follows that 
    \[
    NJ_{\det}(A)=\|\adj(A)\|_F^2,
    \]
    where $\|\cdot\|_F$ stands for the
    Frobenius norm.

    Let $\hat\Sigma$ be the set of rank $r-1$ matrices of $\C^{r\times r}$. It is clear that $\hat\Sigma$ is the smooth part of the algebraic variety $\Sigma$. 
    It follows that integrating over $\Sigma$ is the same as integrating over $\hat\Sigma$, then
    \begin{equation}\label{eq:rho0}
    \rho_{\det(A)}(0)=\frac{1}{\pi^{r^2}\sigma^{2r^2}}\int_{\hat\Sigma}e^{\frac{-\|A\|^2}{\sigma^2}}\frac{1}{\|\adj(A)\|_F^2}d\hat\Sigma.
    \end{equation}

    Given a matrix $A \in \hat{\Sigma}$, the kernel is a one dimensional subspace of $\C^r$ given by the orthogonal complement of the column space of $A$. Furthermore, given a direction $v\in\mathbb P(\C^r)$, it is easy to check that there exist a matrix $A\in\hat\Sigma$ such that the column space is orthogonal to $v$. Therefore, there is a well defined smooth surjective submersion
    \begin{equation}\label{def_phi}
    \varphi:\hat\Sigma\to\mathbb P(\C^r), \qquad \varphi(A)=v, \qquad \text{such that \ } Av=0,  
    \end{equation}
    {where $\mathbb P(\C^r)$ is the projective space, i.e., the space of one dimensional subspaces of $\C^r$.}
    
    A straightforward computation, using the derivative of an implicit function, shows that the differential map $D\varphi(A):T_A\hat\Sigma\to T_v \mathbb {P}(\C^r)$ is defined by 
    \[
    D\varphi(A)\dot A=-A^\dagger \dot A v,
    \]
    {where $A^\dagger$ is the Moore-Penrose pseudo-inverse of $A$.}
    The tangent space to $\hat\Sigma$ at $A$, can be parametrized as $\{\dot X A+A\dot Y \ : \dot X,\dot Y\in\C^{r\times r}\}$ (see Arnold et al. \cite{arnold}).

    It is easy to check that the hermitian complement to $\ker D\varphi(A)$ is the image of $\{(0,\dot wv^*) \ : \ \dot w^*v=0\}$ by the map $(\dot X,\dot Y)\mapsto \dot X A+A\dot Y$. 
    So, applying $D\varphi(A)$, we get the folowing composition 
    \[
    \{(0,\dot wv^*) \ : \ \dot w^*v=0\}\mapsto A\dot wv^*\mapsto \dot w,
    \]
    which is the identity.
    
    Then, the normal jacobian of the map $\varphi$ is the inverse of the normal jacobian of the first map, which is in fact the square of the product of the non-zero singular values of $A$.
    In conclusion, the normal jacobian of $\varphi$ satisfies,
    \[
    NJ_{\varphi}(A)=\frac{1}{\|\adj(A)\|^2_F}.
    \]
    {Applying the coarea formula to $\varphi:\hat{\Sigma}\to \mathbb{P}(\C^r)$, equation \eqref{eq:rho0} can be rewritten as}
    \begin{align*}
        \rho_{\det(A)}(0)=&\frac{1}{\pi^{r^2} \sigma^{2r^2}}\int_{v \in \mathbb P(\C^r)}\int_{A \in \C^{r\times(r-1)}}e^{\frac{-\|A\|_F^2}{\sigma^2}}dA d\mathbb P(\C^r).
    \end{align*}  
{Recall that the volume of $\mathbb P(\C^r)$ is $\frac{\pi^{r-1}}{\Gamma(r)}$. 
Noting that the inner intergal does not depend on $v$, it follows that
\begin{align*}
        \rho_{\det(A)}(0)=&\frac{\pi^{r-1}}{\pi^{r^2} \sigma^{2r^2}\Gamma(r)}\int_{A \in \C^{r\times(r-1)}}e^{\frac{-\|A\|_F^2}{\sigma^2}}dA.
    \end{align*}  

Finally, applying Fubini's theorem and taking polar coordinates, we get
\[\rho_{\det(A)}(0)   =\frac{1}{\pi \sigma^{2r}\Gamma(r)}.\]
}

\end{proof}

\begin{rmk}
    Observe that in this case, where the matrix A has Gaussian entries, this can also be proven using Kostlan's theorem, as in \cite[Thm. 4.7.3]{krishna}.
\end{rmk}

\begin{lemma}\label{eqcondexps}
Let $A\in\C^{r\times r}$ be a random matrix with i.i.d. centered Gaussian entries $\mathcal N_{\C}(0,\sigma^2)$.
If $\phi:\C^{r\times r}\to[0,+\infty)$ is a measurable function such that $\phi(MU)=\phi(M)$ for any unitary matrix $U$, then
$$
\underset{A\in\C^{r\times r}}{\mathbb E}\left(\phi(A) \big| \det(A)=0\right)=\underset{A\in\C^{r\times r}}{\mathbb E}\left(\phi(A)\big| A_r=0\right)
$$
where $A_r$ is the $r$-th column of $A$.
\end{lemma}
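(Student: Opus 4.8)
The plan is to reduce the conditioning on the hypersurface $\{\det A=0\}$ to the conditioning on the linear subspace $\{A_r=0\}$ by disintegrating along the kernel direction and exploiting the right-unitary invariance of both the Gaussian law and of $\phi$. Since $\det A=0$ holds almost surely on the smooth stratum $\hat\Sigma$ of rank-$(r-1)$ matrices, I would work on $\hat\Sigma$ and reuse the fibration $\varphi:\hat\Sigma\to\PP(\C^r)$, $\varphi(A)=v$ with $Av=0$, already introduced in the proof of Corollary \ref{densF}.

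First I would record the disintegration obtained there: applying the coarea formula to $\varphi$, the Gaussian measure restricted to $\{\det A=0\}$ decomposes as an integral over the kernel direction $v\in\PP(\C^r)$ of the standard complex Gaussian measure carried by the linear fiber $L_v:=\{A\in\C^{r\times r}:Av=0\}\cong\C^{r\times(r-1)}$; this is precisely the inner integral appearing in the closing displays of that proof. Consequently the conditional expectation splits as $\E(\phi(A)\mid \det A=0)=\int_{\PP(\C^r)}\E\big(\phi(A)\mid \ker A=\langle v\rangle\big)\,d\mu(v)$, where $\mu$ is the (probability) law of the kernel direction and each inner term is the average of $\phi$ against the Gaussian measure on $L_v$.

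The heart of the argument is then to show that the inner conditional expectation does not depend on $v$. I would first observe that $L_{e_r}=\{A:Ae_r=0\}=\{A_r=0\}$, so $\E(\phi(A)\mid \ker A=\langle e_r\rangle)=\E(\phi(A)\mid A_r=0)$; conditioning on $A_r=0$ indeed forces $\ker A=\langle e_r\rangle$ almost surely, since the remaining $r-1$ columns are independent Gaussians, hence linearly independent with probability one. For a general $v$, choose a unitary $W$ with $Wv=e_r$. Then $A\mapsto AW$ is a linear isometry of $\C^{r\times r}$ carrying $L_{e_r}$ bijectively onto $L_v$ (for $Ae_r=0$ one has $(AW)v=A(Wv)=Ae_r=0$) and preserving the standard complex Gaussian measure, while $\phi(AW)=\phi(A)$ by hypothesis. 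Changing variables therefore gives $\E(\phi(A)\mid \ker A=\langle v\rangle)=\E(\phi(AW)\mid A_r=0)=\E(\phi(A)\mid A_r=0)$, independently of $v$.

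Finally, since the integrand in the disintegration is the constant $\E(\phi(A)\mid A_r=0)$ and $\mu$ is a probability measure on $\PP(\C^r)$, integrating over $v$ yields the claimed identity $\E(\phi(A)\mid \det A=0)=\E(\phi(A)\mid A_r=0)$. The main obstacle I anticipate is purely measure-theoretic: the two conditioning events have different real codimensions ($2$ versus $2r$), so the very equality of the two conditional expectations only becomes meaningful once both are read through the coarea disintegration inherited from Corollary \ref{densF}; once that framework is in place, the unitary change of variables is routine, the only point needing care being that the pushforward of the Gaussian on $L_{e_r}$ under $A\mapsto AW$ is exactly the Gaussian on $L_v$, which is immediate because $W$ is unitary.
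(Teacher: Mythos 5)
Your proof is correct and follows essentially the same route as the paper's: both disintegrate the conditional law on $\{\det A=0\}$ along the kernel-direction fibration $\varphi:\hat\Sigma\to\mathbb{P}(\C^r)$ from Corollary \ref{densF} (so that the normal Jacobian of $\det$ cancels against that of $\varphi$ and plain Gaussian measure appears on each fiber $L_v$), and then use the right-unitary invariance of $\phi$ and of the Gaussian law to identify every fiber integral with the one over $\{A_r=0\}$. The only difference is presentational: you make the invariance step explicit via the change of variables $A\mapsto AW$ with $Wv=e_r$ and absorb all normalizing constants into a probability measure on $\mathbb{P}(\C^r)$, whereas the paper tracks the explicit constants $\rho_{\det(A)}(0)$ and $\mathrm{vol}\bigl(\mathbb{P}(\C^r)\bigr)$.
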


\begin{proof}
    By definition, the conditional expectation of $\phi(A)$ conditional to $\{\det(A)=0\}$ is equal to the integral of $\phi(A)$ with respect to the conditional density, i.e., 
    $$
    \mathbb E\left(\phi(A)  \big| \det(A)=0\right)=\frac{1}{\rho_{\det(A)}(0)}\int_\Sigma \frac{\phi(A)}{NJ_{\det}(A)}\frac{e^{\frac{-\|A\|^2}{\sigma^2}}}{\pi^{r^2}\sigma^{2r^2}}d\Sigma.
    $$
    Since $\rho_{\det(A)}(0)=\frac{1}{\pi \sigma^{2r}\Gamma(r)}$, $\phi(AU)=\phi(A)$ for every unitary matrix $U$, {and applying the coarea formula to the same map $\varphi$ in \eqref{def_phi} as in the previous corollary}, we get 
    $$
    \mathbb E\left(\phi(A)  \big| \det(A)=0\right)=\int_{\C^{r\times(r-1)}} \phi(A)\frac{e^{\frac{-\|A\|^2}{\sigma^2}}}{\pi^{r(r-1)}\sigma^{2r(r-1)}}dA=\mathbb E\left(\phi(A)  \big| A_r=0\right).
    $$
\end{proof}

\subsubsection{Random matrices computations}

The proofs of the following lemmas follow the ideas of Azaïs-Wschebor \cite{AW}, where $|\det A|$ is considered as the volume of the parallelotope in $\C^r$ generated by the columns $A_1,\cdots, A_r$ of $A$.

Observe that $\vol(A_1,\cdots,A_k)=\vol(A_1,\cdots,A_{k-1})\|\Pi_{V_{k-1}^\perp}A_k\|$, where $V_{k-1}$ is the subspace generated by $A_1,\cdots, A_{k-1}$, and $\|\Pi_{V_{k-1}^\perp}A_k\|$ is the Euclidean norm in $\C^r$ of the orthogonal projection of the vector $A_k$ onto the hermitian complement subspace $V_{k-1}$.

\begin{lemma}\label{explogdet}
    Let $A$ be a $r\times r$ random complex matrix with standard Gaussian i.i.d. entries.
    Then
    \[
    \mathbb E(\log|\det A|)=\frac{r(\psi(r+1)-1)}{2},
    \]
    where \(\psi(n)=\frac{\Gamma'(n)}{\Gamma(n)}\) is the digamma function.
\end{lemma}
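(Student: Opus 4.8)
The plan is to exploit the factorization of $|\det A|$ as the volume of the parallelotope spanned by the columns $A_1,\dots,A_r$ that is recalled just before the statement. Writing $V_{k-1}=\langle A_1,\dots,A_{k-1}\rangle$ (with $V_0=\{0\}$), repeated application of $\vol(A_1,\dots,A_k)=\vol(A_1,\dots,A_{k-1})\,\|\Pi_{V_{k-1}^\perp}A_k\|$ gives
$$
|\det A|=\vol(A_1,\dots,A_r)=\prod_{k=1}^r\|\Pi_{V_{k-1}^\perp}A_k\|.
$$
Taking logarithms and using linearity of expectation reduces the whole computation to evaluating $\E\bigl(\log\|\Pi_{V_{k-1}^\perp}A_k\|\bigr)$ for each $k$, so the remaining work is to identify the law of each projected norm and to sum the results.

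For fixed $k$ I would condition on the first $k-1$ columns. Since the columns are independent and $A_k$ is a standard complex Gaussian vector in $\C^r$, the unitary invariance of its law shows that the orthogonal projection $\Pi_{V_{k-1}^\perp}A_k$ onto the (generically $(r-k+1)$-dimensional) complement $V_{k-1}^\perp$ is again a standard complex Gaussian vector in that subspace, and its distribution does not depend on the particular realization of $V_{k-1}$. Consequently $\|\Pi_{V_{k-1}^\perp}A_k\|^2$ is the sum of $r-k+1$ independent unit-mean exponential variables, i.e. a Gamma-distributed variable of shape $r-k+1$, both conditionally and unconditionally.

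I would then use the elementary identity $\E(\log X)=\psi(m)$ for a Gamma variable $X$ of shape $m$, obtained by differentiating the moment formula $\E(X^s)=\Gamma(m+s)/\Gamma(m)$ at $s=0$. This yields $\E\bigl(\log\|\Pi_{V_{k-1}^\perp}A_k\|\bigr)=\tfrac12\psi(r-k+1)$, and summing over $k$ gives $\E(\log|\det A|)=\tfrac12\sum_{m=1}^r\psi(m)$. To reach the stated closed form I would evaluate this sum by the telescoping argument based on $\psi(m+1)=\psi(m)+\tfrac1m$: since $m\,\psi(m+1)-(m-1)\,\psi(m)=\psi(m)+1$, summing from $1$ to $r$ collapses the left-hand side to $r\,\psi(r+1)$, whence $\sum_{m=1}^r\psi(m)=r(\psi(r+1)-1)$ and the claim follows.

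The \textbf{main obstacle} I anticipate is the conditional distributional step: one must argue carefully that, conditionally on $A_1,\dots,A_{k-1}$, the projected column is a standard complex Gaussian in $V_{k-1}^\perp$ of the correct dimension, controlling the measure-zero event in which the first $k-1$ columns fail to be independent, so that each factor's expectation is computed against the correct marginal Gamma law. Once this is secured, the moment computation giving $\E(\log X)=\psi(m)$ and the digamma telescoping are routine.
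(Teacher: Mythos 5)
Your proposal is correct and follows essentially the same route as the paper: the volume factorization $|\det A|=\prod_k\|\Pi_{V_{k-1}^\perp}A_k\|$, reduction to expected log-norms of standard complex Gaussian vectors in dimension $r-k+1$, and the digamma summation $\sum_{m=1}^r\psi(m)=r(\psi(r+1)-1)$. The only cosmetic differences are that you compute $\E(\log\|v\|)=\tfrac12\psi(m)$ via the Gamma-distribution moment formula while the paper uses polar coordinates, and your telescoping derivation of the digamma sum replaces the paper's harmonic-sum identity; both variants are equally valid.
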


\begin{proof}
    Using the linearity of the expectation and the properties of the logarithm, iterating this process, we conclude that 
    \begin{equation}\label{voltrick}
    \mathbb E(\log(\vol(A_1,\cdots,A_r)))=\displaystyle\sum_{i=1}^{r}\underset{v_i\in \C^i}{\mathbb E}(\log(\|v_i\|)),
    \end{equation} 
    where $v_i$ is a standard Gaussian complex vector in $\C^i$, and abusing notation, $\|v_i\|$ stands for the Euclidean norm of $v_i$ in its corresponding space.

    Using polar coordinates,
    \[
    \underset{v_i\in \C^i}{\mathbb E}(\log(\|v_i\|))=\frac{2}{\Gamma(i)}\int_0^\infty \rho^{2i-1}e^{-\rho^2} \log\rho d\rho=\frac{\psi(i)}{2}, 
    \]
{where the last equality follows from taking \(t=\rho^2\) and differentiating the integral expression of \(\Gamma(i)\) with respect to \(i\), or see directly Gradshteyn and Ryzhik \cite[4.352.4]{Gradshteyn}.}

    Using that $\psi(n) = \left(\displaystyle\sum_{i=1}^{n-1}\frac{1}{i}\right)-\gamma$, and that $n\psi(n+1) = n\psi(n)+1$, it follows

    \[
    \sum_{i=1}^r \psi(i) = r(\psi(r+1)-1).  
    \]
    Then, we conclude
    \begin{align*}
        \mathbb E(\log(|\det A|))&=\frac{r(\psi(r+1)-1)}{2}.
    \end{align*}
\end{proof}


\begin{lemma}
    Let $A$ be a $r\times r$ random complex matrix with standard Gaussian i.i.d. entries.

    Then, $\underset{A\in \C^{r\times r}}{\mathbb E}\left(|\det A|^2\log|\det A|\right)$ is equal to 
    $$
    \frac{r!}{2}\bigg((r+1)\psi(r+2)-r-\psi(2)\bigg).
    $$ 
\end{lemma}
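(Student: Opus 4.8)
The plan is to reuse the Gram--Schmidt (volume) decomposition of $|\det A|$ already exploited in Lemma~\ref{explogdet}, together with the independence that the rotational invariance of the complex Gaussian provides. Writing $A_1,\dots,A_r$ for the columns of $A$ and $V_{i-1}$ for the subspace generated by $A_1,\dots,A_{i-1}$, one has $|\det A|=\vol(A_1,\dots,A_r)=\prod_{i=1}^r n_i$ with $n_i=\|\Pi_{V_{i-1}^\perp}A_i\|$. The key structural fact I would establish first is that the $n_i$ are \emph{mutually independent}, and that $n_i$ is distributed as the norm of a standard complex Gaussian vector in $\C^{\,r-i+1}$. This follows by conditioning on $A_1,\dots,A_{i-1}$: given the subspace $V_{i-1}$, the projection $\Pi_{V_{i-1}^\perp}A_i$ is a standard Gaussian on the $(r-i+1)$-dimensional space $V_{i-1}^\perp$, whose norm has a law not depending on the conditioning, so $n_i$ is independent of $(A_1,\dots,A_{i-1})$ and hence of $n_1,\dots,n_{i-1}$.

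With this in hand, I would reindex so that $w_1,\dots,w_r$ are independent with $w_m$ distributed as the norm of a standard Gaussian in $\C^m$ (the multiset of dimensions $\{r-i+1\}_{i=1}^r$ is exactly $\{1,\dots,r\}$). Then $|\det A|^2=\prod_{m=1}^r w_m^2$ and $\log|\det A|=\sum_{m=1}^r \log w_m$, so I would expand
\[
\mathbb E\!\left(|\det A|^2\log|\det A|\right)=\sum_{m=1}^r \mathbb E\!\left(w_m^2\log w_m\right)\prod_{\ell\neq m}\mathbb E\!\left(w_\ell^2\right),
\]
using independence to split each summand into a product of single-variable moments.

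It then remains to compute two elementary moments. For $v$ a standard Gaussian in $\C^k$, the same polar-coordinate computation as in Lemma~\ref{explogdet} (substitute $t=\rho^2$ and differentiate $\Gamma$ in the exponent) gives $\mathbb E(\|v\|^2)=k$ and $\mathbb E(\|v\|^2\log\|v\|)=\tfrac{k}{2}\psi(k+1)$. Substituting $\mathbb E(w_\ell^2)=\ell$, so that $\prod_{\ell\neq m}\mathbb E(w_\ell^2)=r!/m$, together with $\mathbb E(w_m^2\log w_m)=\tfrac{m}{2}\psi(m+1)$, the factor $m$ cancels and the sum collapses to
\[
\mathbb E\!\left(|\det A|^2\log|\det A|\right)=\frac{r!}{2}\sum_{m=1}^r \psi(m+1).
\]

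The final step is to evaluate the digamma sum. Using the identity $\sum_{i=1}^{n}\psi(i)=n(\psi(n+1)-1)$ established in the proof of Lemma~\ref{explogdet} (applied with $n=r+1$), together with the shift $\psi(1)=\psi(2)-1$, one gets $\sum_{m=1}^r\psi(m+1)=\sum_{j=1}^{r+1}\psi(j)-\psi(1)=(r+1)\psi(r+2)-r-\psi(2)$, which yields the claimed value. I expect the only delicate point to be justifying the mutual independence of the $n_i$ cleanly; once that is in place, the expansion and the two Gamma-function moments are routine.
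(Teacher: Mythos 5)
Your proposal is correct and takes essentially the same route as the paper: both rest on the Gram--Schmidt volume factorization $|\det A|=\prod_{i}\|\Pi_{V_{i-1}^\perp}A_i\|$, the conditional-independence argument giving these factors the laws of norms of standard complex Gaussians in $\C^{r-i+1}$, the two moments $\mathbb E\|v\|^2=k$ and $\mathbb E\left(\|v\|^2\log\|v\|\right)=\frac{k}{2}\psi(k+1)$, and the same digamma summation identity. The only difference is presentational: the paper organizes the computation as a recursion in $I_r(k)$ and $V_r(k)$, whereas you expand the product-times-sum in a single step using full mutual independence of the factors, which is just the unrolled form of that recursion.
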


\begin{proof}
    Let us define, 
    \[
    I_r(k)\coloneqq \underset{A_i\in \C^r}{\mathbb E}\left(\vol(A_1,\cdots,A_k)^2\log\left(\vol(A_1,\cdots,A_k)\right)\right)
    \]
    \[
    V_r(k)\coloneqq \underset{A_i\in \C^r}{\mathbb E}\left(\vol(A_1,\cdots,A_k)^2\right)
    \]
    where the $A_i$'s are independent standard Gaussian random vectors in $\C^r$.

    {Observe that 
    \begin{equation}\label{eq:recurrencia}
        \vol(A_1,\cdots,A_k)=\vol(A_1,\cdots,A_{k-1})\|\Pi_{V_{k-1}^\perp}A_k\|,
    \end{equation}
     where $V_{k-1}$ is the subspace generated by $A_1,\cdots, A_{k-1}$.
     Now, using the independence of the random vector, and proceeding iteratively, it is easy to check that
    }
    \begin{align*}\label{exp_iter}
    I_r(k)=&I_r(k-1)\underset{v\in \C^{r-k+1}}{\mathbb E}\|v\|^2+V_r(k-1)\underset{v\in \C^{r-k+1}}{\mathbb E}\left(\|v\|^2\log\|v\|\right)\\
    \vdots&\\
    =&I_r(1)\prod_{i=r-k+1}^r\underset{v\in \C^{i}}{\mathbb E}\|v\|^2+\frac{\Gamma(r+1)}{\Gamma(r-k+1)}\sum_{i=r-k+1}^{r-1}\frac{1}{i}\underset{v\in \C^i}{\mathbb E}\left(\|v\|^2\log\|v\|\right)\\
    =&\frac{\Gamma(r+1)}{\Gamma(r-k+1)}\sum_{i=r-k+1}^r\frac{1}{i}\underset{v\in \C^i}{\mathbb E}\left(\|v\|^2\log\|v\|\right).
    \end{align*}

    Using the iterative formula for $I_r(r)$, we get that the expectation of $|\det A|^2\log|\det A|$ is equal to
    \begin{align*}
    \underset{A\in \C^{r\times r}}{\mathbb E}\left(|\det A|^2\log|\det A|\right)=\frac{r!}{2}\left((r+1)\psi(r+2)-r-\psi(2)\right).
    \end{align*}
\end{proof}

\begin{cor}\label{corexpdet}
    Let $A$ be a $r\times r$ random complex matrix with standard Gaussian i.i.d. entries and $M$ be a fixed $r \times r$ matrix.

    Then, $\underset{A\in \C^{r\times r}}{\mathbb E}\left(|\det AM|^2\log|\det AM|\right)$ is equal to 
    $$
    |\det M|^2\frac{r!}{2}\bigg((r+1)\psi(r+2)-r-\psi(2)+\log|\det M|^2\bigg).
    $$ 
\end{cor}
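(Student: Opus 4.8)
The plan is to reduce everything to the immediately preceding lemma by exploiting the multiplicativity of the determinant. Since $\det(AM)=\det(A)\det(M)$, I would write $|\det AM|^2=|\det M|^2\,|\det A|^2$ and split the logarithm as $\log|\det AM|=\log|\det A|+\log|\det M|$. (If $\det M=0$, then $\det(AM)=0$ for every $A$, the left-hand side vanishes under the usual convention $0\cdot\log 0=0$ coming from $\lim_{x\to 0^+}x^2\log x=0$, and the right-hand side is $0$ as well, so the identity is trivial; hence I assume $\det M\neq 0$.) Substituting and distributing, the integrand becomes
\[
|\det AM|^2\log|\det AM|=|\det M|^2\Big(|\det A|^2\log|\det A|+\log|\det M|\cdot|\det A|^2\Big).
\]

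Next I would take expectation over $A$ and use linearity, pulling the constants $|\det M|^2$ and $\log|\det M|$ outside. This leaves two moments of the random matrix $A$ to evaluate: the mixed moment $\mathbb E(|\det A|^2\log|\det A|)$, which is exactly the content of the preceding lemma and equals $\frac{r!}{2}\big((r+1)\psi(r+2)-r-\psi(2)\big)$, and the plain second moment $\mathbb E(|\det A|^2)$. For the latter I would reuse the parallelotope-volume decomposition already in play in this subsection: writing $|\det A|=\vol(A_1,\dots,A_r)$ and iterating the Gram--Schmidt recurrence \eqref{eq:recurrencia}, independence of the columns gives $\mathbb E(|\det A|^2)=\prod_{i=1}^{r}\mathbb E_{v\in\C^i}\|v\|^2$. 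Since each standard complex Gaussian coordinate satisfies $\mathbb E|v_j|^2=1$, we have $\mathbb E_{v\in\C^i}\|v\|^2=i$, and therefore $\mathbb E(|\det A|^2)=r!$.

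Finally I would assemble the two pieces. Combining the lemma value with the extra term $r!\,\log|\det M|$ coming from the second moment and factoring out $|\det M|^2\frac{r!}{2}$ yields
\[
|\det M|^2\frac{r!}{2}\Big((r+1)\psi(r+2)-r-\psi(2)+2\log|\det M|\Big),
\]
and rewriting $2\log|\det M|=\log|\det M|^2$ gives precisely the claimed expression. I do not expect any genuine obstacle here: the result is a purely formal consequence of the homomorphism property of $\det$ together with the two moment computations, one of which is the preceding lemma and the other of which follows in a line from the same volume argument used throughout this subsection. The only point requiring even a short verification is the identity $\mathbb E(|\det A|^2)=r!$.
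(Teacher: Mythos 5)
Your proposal is correct and follows essentially the same route as the paper's proof: factor $\det(AM)=\det(A)\det(M)$, split the logarithm, invoke the preceding lemma for $\mathbb{E}\left(|\det A|^2\log|\det A|\right)$, and compute $\mathbb{E}\left(|\det A|^2\right)=r!$ via the same column-volume recurrence. Your explicit treatment of the degenerate case $\det M=0$ and the verification $\mathbb{E}_{v\in\C^i}\|v\|^2=i$ are minor additions the paper leaves implicit, but the argument is identical in substance.
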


\begin{proof}
    Using the multiplicative nature of the determinant, the properties of the logarithm, and the linearity of the expectation, one gets that $\underset{A\in \C^{r\times r}}{\mathbb E}\left(|\det AM|^2\log|\det AM|\right)$ is equal to
    \[
    |\det M|^2\left(\underset{A\in \C^{r\times r}}{\mathbb E}\left(|\det A|^2\log|\det A|\right)+\log|\det M|\underset{A\in \C^{r\times r}}{\mathbb E}|\det A|^2\right).
    \]
    {Now observe that using the same recursive argument as in \eqref{eq:recurrencia}, one gets that $\displaystyle{\underset{A\in \C^{r\times r}}{\mathbb E}|\det A|^2 = \prod_{i=1}^r \mathbb E_{v_i \in \C^i} \|v_i\|^2 } = r!$, where again $v_i$ is a standard gaussian vector in $\C^i$. The result follows. }
\end{proof}

\subsection{Three little terms}\label{computations}

We are now in conditions to compute the three terms of \eqref{tres_terminos}.

\medskip

Let us focus now on the first and last terms, which follow from a straightforward computation.

\begin{prop} In the conditions of Theorem \ref{main}, we have
    $$
    \mathbb E\left(\sum_{i=1}^{N}\log\sqrt{1+|z_i|^2}\right)=\frac{N}{2}.
    $$
\end{prop}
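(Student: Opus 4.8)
The plan is to exploit the rotational invariance of the process recorded in Remark \ref{invariant_ortho}, which collapses the whole expectation to a one-dimensional integral over the Riemann sphere. First I would observe that since $G_d$ has i.i.d.\ complex Gaussian entries it is almost surely invertible, so the leading coefficient $\det(G_d)$ is nonzero and $F$ has degree exactly $N$; hence there are exactly $N$ finite roots $z_1,\dots,z_N$ almost surely, and the sum on the left-hand side is a genuine sum of $N$ nonnegative terms (each summand is $\ge 0$ because $1+|z_i|^2\ge 1$), so the quantity is well defined and the interchange of sum and expectation is unproblematic.

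Next I would use that, by Remark \ref{invariant_ortho}, the configuration $\{\hat z_1,\dots,\hat z_N\}$ on the Riemann sphere is invariant in distribution under the orthogonal group. It follows that the expected empirical measure $\mu:=\mathbb{E}\big(\sum_{i=1}^{N}\delta_{\hat z_i}\big)$ is a rotation-invariant measure of total mass $N$ on the sphere. Since the unique rotation-invariant probability measure on a sphere is the uniform one, $\mu$ must equal $N$ times the uniform probability measure $\sigma$ on the Riemann sphere. Consequently, for any integrable $g$,
$$
\mathbb{E}\Big(\sum_{i=1}^{N} g(\hat z_i)\Big)=N\int g\,d\sigma .
$$

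It then remains to express the integrand $\log\sqrt{1+|z|^2}$ as a function on the Riemann sphere and integrate it against $\sigma$. Under stereographic projection onto the Riemann sphere (radius $1/2$, centered at $(0,0,1/2)$), the point $z\in\C$ maps to $\hat z=\tfrac{1}{1+|z|^2}(\mathrm{Re}\,z,\ \mathrm{Im}\,z,\ |z|^2)$, whose third (height) coordinate is $c=\tfrac{|z|^2}{1+|z|^2}$; hence $1+|z|^2=(1-c)^{-1}$ and $\log\sqrt{1+|z|^2}=-\tfrac12\log(1-c)$. By Archimedes' hat-box theorem the pushforward of $\sigma$ under the height map $\hat z\mapsto c$ is the uniform distribution on $[0,1]$, so
$$
\int \log\sqrt{1+|z|^2}\,d\sigma=-\frac12\int_0^1\log(1-c)\,dc=\frac12 ,
$$
and multiplying by $N$ gives the claimed value $\tfrac{N}{2}$.

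I expect the only delicate point to be the step that turns distributional $O(3)$-invariance into the exact identity $\mu=N\sigma$, i.e.\ invoking uniqueness of the rotation-invariant probability measure on the sphere, together with a careful bookkeeping of the Riemann-sphere geometry (center, radius, and the height coordinate $c$) so that the reduction to an integral over $[0,1]$ is exact rather than merely asymptotic. Everything after that is an elementary single-variable integral, which is why the computation deserves to be called straightforward.
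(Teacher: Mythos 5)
Your proof is correct, but it takes a genuinely different route from the one written in the paper --- in fact, it is exactly the alternative argument that the authors mention, without carrying it out, in the remark immediately following their proof (``the previous result can be also proved by stating that the roots are invariant under the action of the unitary matrices''). The paper's own proof goes through the explicit first intensity of the zero process: since $F$ is a polygaf, Krishnapur's formula yields $\rho_1(z)=\frac{N}{\pi(1+|z|^2)^2}$, and the expectation becomes the planar integral $\frac{N}{\pi}\int_{\C}\frac{\log\sqrt{1+|z|^2}}{(1+|z|^2)^2}\,dz=\frac{N}{2}$, evaluated in polar coordinates. You reach the same intermediate object --- the expected empirical measure equals $N$ times the uniform probability measure on the sphere --- by pure symmetry: distributional invariance of the root configuration under rotations (Remark \ref{invariant_ortho}) plus uniqueness of the rotation-invariant probability measure on the sphere, and you then evaluate the spherical integral via Archimedes' hat-box theorem, reducing everything to $-\frac{1}{2}\int_0^1\log(1-c)\,dc=\frac{1}{2}$. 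Note that both routes ultimately rest on a citation to Krishnapur: the paper on the polygaf intensity formula, you on the invariance statement (Proposition 2.1.1 of the thesis, as quoted in Remark \ref{invariant_ortho}). What your argument buys is conceptual economy --- no intensity formula is needed, and the one-dimensional integral is elementary; what the paper's argument buys is the pointwise density $\rho_1$ itself, which identifies the law of a typical root and handles arbitrary test functions in the plane just as directly. Your supporting details are all sound: the a.s.\ degree-$N$ claim via $\det(G_d)\neq 0$, the nonnegativity of the summands justifying the exchange of sum and expectation, the projection formula giving $1+|z|^2=(1-c)^{-1}$, and the hat-box reduction are each correct.
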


\begin{proof}
    Observe that the function $F(z)$ is in fact a \textit{polygaf} (see for example Krishnapur \cite{krishnaphd}).
    Then, following Krishnapur \cite[section 2.3]{krishnaphd}, we have that the first intensity function is
    $$
    \rho_1(z)=\frac{1}{4\pi}\Delta_z\log(K(z,z)), \qquad z\in\C,
    $$
where $\frac{1}{4}\Delta_z u=\frac{\partial^2 u}{\partial z\partial \overline{z}}$ is the complex laplacian of $u$, and $K(z,w)=(1+z\overline{w})^d$ is the covariance kernel {of each polynomial in the entries of $A$}.
Then, 
\[
\rho_1(z)=\frac{N}{\pi}\frac{1}{(1+|z|^2)^2}, \qquad z\in \C.
\]

It follows,
\[
\underset{}{\mathbb E}\left(\sum_{i=1}^{N}\log\sqrt{1+|z_i|^2}\right)=\frac{N}{\pi}\int_\C\frac{\log\sqrt{1+|z|^2}}{(1+|z|^2)^2}d\C=\frac{N}{2},
\]
where the last equality follows from taking polar coordinates.
\end{proof}

\begin{rmk}
    The previous result can be also proved by stating that the roots are invariant under the action of the unitary matrices (cf. Remark \ref{invariant_ortho}).
\end{rmk}

\begin{prop} In the conditions of Theorem \ref{main}, we have
$$
\mathbb E(\log|\det(G_d)|)=\frac{r(\psi(r+1)-1)}{2}.
$$
\end{prop}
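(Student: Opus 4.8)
The plan is to observe that this proposition is an immediate corollary of Lemma \ref{explogdet}. The leading coefficient $a_N$ of $F(z)$ is $\det(G_d)$, and the top-degree coefficient matrix carries the binomial weight $\binom{d}{d}^{1/2}=1$, so no rescaling occurs: $G_d$ is precisely an $r\times r$ matrix with i.i.d. standard complex Gaussian entries $\mathcal{N}_\C(0,1)$. This places $G_d$ exactly in the hypotheses of Lemma \ref{explogdet}, so applying that lemma with $A=G_d$ yields the stated value $\frac{r(\psi(r+1)-1)}{2}$ directly, with no further computation required.

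For completeness, I would recall the route Lemma \ref{explogdet} establishes: the volume decomposition $\vol(A_1,\dots,A_r)=\prod_{i=1}^r\|\Pi_{V_{i-1}^\perp}A_i\|$ reduces $\mathbb{E}(\log|\det A|)$ to $\sum_{i=1}^r \mathbb{E}_{v\in\C^i}(\log\|v\|)$ for a standard Gaussian $v$, each summand equalling $\psi(i)/2$ by a polar-coordinate integral, and then the identity $\sum_{i=1}^r \psi(i)=r(\psi(r+1)-1)$ gives the closed form. Accordingly, there is no genuine obstacle at this stage; the only thing meriting a line of verification is the normalization, namely that the top-degree weight is unity so that $G_d$ carries the variance-one Gaussian law and Lemma \ref{explogdet} applies verbatim.
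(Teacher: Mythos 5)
Your proof is correct and matches the paper's own argument: the paper likewise observes that $G_d$ is an $r\times r$ matrix with i.i.d.\ standard complex Gaussian entries and applies Lemma \ref{explogdet} directly. Your extra remark verifying that the top-degree binomial weight $\binom{d}{d}^{1/2}=1$ leaves $G_d$ with the variance-one law is a reasonable (if brief) normalization check that the paper leaves implicit.
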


\begin{proof}
    The result follows from recalling that $G_d$ is an $r\times r$ random matrix with i.i.d. standard Gaussian entries and applying Lemma \ref{explogdet}.
\end{proof}

Now we will focus on the second term of \eqref{tres_terminos}.

\medskip 

The random complex polynomial $F(z)$, can be seen as a stochastic process from $\C$ to $\C$.
Applying the weighted Kac-Rice formula to this particular process (see {for example 
\cite[Thm. 6.10]{AW} or \cite[Thm. 6.1]{AAL}}) one gets the following expresion,
\begin{equation}\label{eq:kacrice}
\mathbb E \left(\sum_{i=1}^{N}\log|F'(z_i)|\right)=\int_{z\in\C}\mathbb E\left(|F'(z)|^2\log|F'(z)| \bigg| F(z)=0\right)P_{F(z)}(0)dz,
\end{equation}
where $P_{F(z)}(0)$ is the density at $0$ of the random variable $F(z)$, and $\mathbb E\left(\cdot \big| F(z)=0\right)$ is the conditional expectation, conditioned on the event $\{F(z)=0\}$.

Let us define $A(z)\coloneqq \displaystyle\sum_{i=0}^d G_i{d\choose i}^{1/2}z^i$, where each $G_i$ is a $r\times r$ random matrix with i.i.d. standard Gaussian entries $\mathcal{N}_\C(0,1)$.

As a direct application of Corollary \ref{densF} we get the following.

\begin{prop}\label{densFZ}
    In the conditions of Theorem \ref{main}, the density $P_{F(z)}(0)$ satisfies
    \[
    P_{F(z)}(0)=\frac{1}{\pi (1+|z|^2)^N\Gamma(r)}.
    \]
\end{prop}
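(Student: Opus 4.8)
The plan is to fix an arbitrary $z\in\C$ and recognize that $A(z)=\sum_{i=0}^d G_i{d\choose i}^{1/2}z^i$ is itself an $r\times r$ random Gaussian matrix to which Corollary \ref{densF} applies directly. The whole statement will then reduce to identifying the correct variance $\sigma^2$ as a function of $z$.

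First I would examine a single entry of $A(z)$. The $(k,\ell)$ entry is $A(z)_{k\ell}=\sum_{i=0}^d (G_i)_{k\ell}{d\choose i}^{1/2}z^i$, a fixed linear combination of the independent standard complex Gaussians $(G_i)_{k\ell}$, and hence is itself a centered complex Gaussian. Its variance is
$$
\sum_{i=0}^d {d\choose i}|z|^{2i}=(1+|z|^2)^d,
$$
by the binomial theorem. Next I would check independence across entries: distinct pairs $(k,\ell)$ involve disjoint collections of the scalar Gaussians $\{(G_i)_{k\ell}\}_i$, so the entries of $A(z)$ are mutually independent. Therefore, for each fixed $z$, $A(z)$ is an $r\times r$ matrix with i.i.d. entries distributed as $\mathcal N_\C\big(0,(1+|z|^2)^d\big)$.

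At this point I would simply invoke Corollary \ref{densF} with $\sigma^2=(1+|z|^2)^d$, which gives
$$
P_{F(z)}(0)=\rho_{\det(A(z))}(0)=\frac{1}{\pi\,\sigma^{2r}\,\Gamma(r)}.
$$
Substituting $\sigma^{2r}=(1+|z|^2)^{dr}=(1+|z|^2)^{N}$, using $N=dr$, yields exactly the claimed expression $P_{F(z)}(0)=\dfrac{1}{\pi(1+|z|^2)^N\Gamma(r)}$.

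I do not expect any genuine obstacle here, since the result is a clean specialization of Corollary \ref{densF}; the only points requiring care are the variance computation (which is the binomial identity above, and is precisely why the weights ${d\choose i}^{1/2}$ were chosen) and the verification that the entries remain independent for each fixed $z$. Both are routine, so the proof is essentially a one-line application once the Gaussian structure of $A(z)$ is made explicit.
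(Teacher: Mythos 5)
Your proof is correct and is essentially the paper's own argument: the paper likewise observes that $A(z)$ has i.i.d.\ centered complex Gaussian entries of variance $(1+|z|^2)^d$ and then applies Corollary \ref{densF} with $\sigma^2=(1+|z|^2)^d$, so that $\sigma^{2r}=(1+|z|^2)^N$. You have merely spelled out the variance computation and the independence check that the paper leaves implicit.
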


\begin{proof}
It follows from the fact that $A(z)\in\C^{r\times r}$ is a random matrix with i.i.d. centered Gaussian entries with variance $(1+|z|^2)^d$, and $F(z)=\det(A(z))$.
\end{proof}

\begin{prop}\label{condexp}
    In the conditions of Theorem \ref{main}, the conditional expectation $\mathbb E\left(|F'(z)|^2\log|F'(z)| \big| F(z)=0\right)$ is equal to
    \[
    \frac{N(1+|z|^2)^{N-2}\Gamma(r)}{2}\bigg((r+1)\psi(r+2)-r-\psi(2)+\log\left(d(1+|z|^2)^{N-2}\right)\bigg).
    \]
\end{prop}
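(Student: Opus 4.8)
The plan is to reduce $|F'(z)|$, on the event $\{F(z)=0\}$, to a single $r\times r$ standard Gaussian determinant and then invoke the two determinant lemmas already proved. Write $B:=A(z)$ and $C:=A'(z)$, so that by Jacobi's formula $F'(z)=\tr(\adj(B)\,C)$. Both $B$ and $C$ have i.i.d. Gaussian entries, jointly Gaussian entry by entry; a direct moment computation gives $\E|B_{jk}|^2=s:=(1+|z|^2)^d$, $\E(C_{jk}\overline{B_{jk}})=\bar z\,d(1+|z|^2)^{d-1}$ and $\E|C_{jk}|^2=d(1+d|z|^2)(1+|z|^2)^{d-2}$. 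I would then perform the entrywise Gaussian regression $C=\beta B+\tilde C$ with $\beta=\bar z\,d/(1+|z|^2)$, where $\tilde C$ is independent of $B$ with i.i.d. entries $\mathcal N_\C(0,\tau^2)$ and, after simplification, $\tau^2=d(1+|z|^2)^{d-2}$.

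Next I condition on $\{F(z)=0\}=\{\det B=0\}$. Generically $B$ then has rank $r-1$; let $u,v$ be unit vectors with $Bv=0$ and $u^*B=0$. Since $\adj(B)$ is rank one with $\adj(B)=\alpha\,v u^*$ and $|\alpha|=\prod_{i=1}^{r-1}\sigma_i(B)$ (the product of the nonzero singular values), Jacobi's formula becomes $F'(z)=\alpha\,u^*Cv$. The purpose of the regression is that $u^*Bv=0$, so the $\beta B$ term drops and $F'(z)=\alpha\,u^*\tilde C v$; moreover, conditionally on $B$ the scalar $u^*\tilde C v$ is $\mathcal N_\C(0,\tau^2)$ and independent of $B$, because $\tilde C\perp B$ and $\|u\|=\|v\|=1$.

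The heart of the argument is the conditioning reduction. After averaging in $\tilde C$, the integrand is a function of $B$ through $|\alpha|$ alone, hence through the singular values of $B$; and since the law of $\tilde C$ is invariant under $B\mapsto BU$ (right multiplication by a unitary preserves the i.i.d. Gaussian law of $\tilde C$), this averaged integrand is right-unitarily invariant. Lemma \ref{eqcondexps} then lets me replace the conditioning $\{\det B=0\}$ by $\{B_r=0\}$ while keeping $\tilde C$. On $\{B_r=0\}$ one has $v=e_r$, the vector $u$ orthogonal to $V:=\mathrm{span}(B_1,\dots,B_{r-1})$, and $|\alpha|=\vol(B_1,\dots,B_{r-1})$, with $B_1,\dots,B_{r-1}$ independent, each having i.i.d. $\mathcal N_\C(0,s)$ coordinates. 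Writing $w:=\tilde C e_r$, a vector with i.i.d. $\mathcal N_\C(0,\tau^2)$ coordinates independent of the $B_i$, the factor $|u^*\tilde C v|=|u^*w|=\|\Pi_{V^\perp}w\|$ is exactly the length of the component of $w$ orthogonal to $V$, so the volume recursion \eqref{eq:recurrencia} folds it in:
\[
|F'(z)|=\vol(B_1,\dots,B_{r-1})\,\|\Pi_{V^\perp}w\|=\vol(B_1,\dots,B_{r-1},w).
\]
Rescaling the $r$ independent columns to standard Gaussians yields $|F'(z)|=\lambda\,|\det\tilde A|$, where $\tilde A\in\C^{r\times r}$ has i.i.d. $\mathcal N_\C(0,1)$ entries and $\lambda^2=s^{\,r-1}\tau^2=d(1+|z|^2)^{N-2}$.

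Finally I expand $|F'|^2\log|F'|=\lambda^2|\det\tilde A|^2(\log\lambda+\log|\det\tilde A|)$ and take expectations, using $\E|\det\tilde A|^2=r!$ (from the proof of Corollary \ref{corexpdet}) and $\E(|\det\tilde A|^2\log|\det\tilde A|)=\tfrac{r!}{2}\big((r+1)\psi(r+2)-r-\psi(2)\big)$ (the preceding Lemma). This produces $\tfrac{\lambda^2 r!}{2}\big((r+1)\psi(r+2)-r-\psi(2)+\log\lambda^2\big)$; substituting $\lambda^2=d(1+|z|^2)^{N-2}$ and $r!=r\,\Gamma(r)$ together with $dr=N$ yields the claimed formula. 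The main obstacle is the conditioning reduction of the third paragraph: one must argue carefully that the $\tilde C$-averaged integrand is right-unitarily invariant so that Lemma \ref{eqcondexps} genuinely applies, and that on $\{B_r=0\}$ the residual Gaussian direction $w$ recombines with the $(r-1)$-column volume into a bona fide $r\times r$ standard determinant. By comparison, the entry-variance bookkeeping for $s$, $\tau^2$ and $\lambda^2$ is routine.
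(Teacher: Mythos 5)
Your proof is correct, and it reaches the stated formula by a genuinely different execution of the same underlying strategy, so a comparison is worthwhile. The paper applies Lemma \ref{eqcondexps} \emph{first}, replacing the conditioning $\{F(z)=0\}$ by $\{A_r(z)=0\}$; it then expands $F'(z)=\tr(\adj(A(z))A'(z))$ as a sum of column-replaced determinants, observes that on $\{A_r(z)=0\}$ only the term $\det\left(A_1(z)|\cdots|A_{r-1}(z)|A_r'(z)\right)$ survives, and regresses only the single column $A_r'(z)=\eta(z)+\beta A_r(z)$ before invoking Corollary \ref{corexpdet}. You invert the order: you regress the whole matrix $A'(z)$ against $A(z)$ first, exploit the rank-one structure $\adj(B)=\alpha\,vu^*$ on the rank-$(r-1)$ stratum to write $F'(z)=\alpha\,u^*\tilde{C}v$, integrate out $\tilde{C}$, and only then apply Lemma \ref{eqcondexps} to the $\tilde{C}$-averaged integrand, which depends on $B$ only through its singular values and is therefore genuinely right-unitarily invariant; the volume recursion \eqref{eq:recurrencia} then reassembles the $r\times r$ determinant. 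Your ordering buys rigor at the paper's weakest point: the paper's assertion that $\varphi(A(z))=|F'(z)|^2\log|F'(z)|$ ``is in fact invariant'' is not, taken literally, about a function of $A(z)$ alone (it depends on $A'(z)$ as well), so Lemma \ref{eqcondexps} does not apply to it verbatim; averaging over the independent part $\tilde{C}$ \emph{before} invoking the lemma is precisely what makes that step legitimate, and your write-up supplies it. The paper's ordering buys brevity: conditioning first makes the kernel direction explicit ($v=e_r$), so no adjugate rank-one identity or volume-recursion reassembly is needed. The quantitative content is identical in both routes --- your $\tau^2$ equals the paper's $\sigma_\eta^2=d(1+|z|^2)^{d-2}$, your $\lambda^2=s^{r-1}\tau^2=d(1+|z|^2)^{N-2}$ equals the paper's $|\det M|^2$, and both reduce to the moments $\E|\det\tilde{A}|^2=r!$ and $\E\left(|\det\tilde{A}|^2\log|\det\tilde{A}|\right)=\frac{r!}{2}\left((r+1)\psi(r+2)-r-\psi(2)\right)$ of Corollary \ref{corexpdet}.
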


\begin{proof}
    Taking $\varphi(A(z))=|F'(z)|^2\log|F'(z)|$, which is in fact invariant, and applying Lemma \ref{eqcondexps}, we have the following equality of conditional expectations,
    \begin{equation}\label{condexps}
    \mathbb E\left(|F'(z)|^2\log|F'(z)| \big| F(z)=0\right)=\mathbb E\left(|F'(z)|^2\log|F'(z)| \big| A_r(z)=0\right)    
    \end{equation}
    where $A_r(z)$ is the $r$-th column of $A(z)$.

    Since $F(z)=\det(A(z))$, we have that $F'(z)=\tr (\adj(A(z))A'(z))$, where $A'(z)$ is the derivative of $A(z)$.
    Using the definition of the adjugate matrix, it follows from a straightforward computation that
    \[
    \tr(\adj(A(z))A'(z))=\sum_{i=1}^r\det\left(A(z)\overset{i}{\leftarrow} A'(z)\right)
    \]
    where the matrix \(\left(A(z)\overset{i}{\leftarrow} A'(z)\right)\) is obtained from $A(z)$ replacing its $i$-th column with the $i$-th column of $A'(z)$. {If we expand this matrix in terms of its columns, we get the expression
    \[\left(A(z)\overset{i}{\leftarrow} A'(z)\right) = \left(A_1(z)\big|\cdots\big|A_{i-1}(z)\big|A'_i(z)\big| A_{i+1}(z)\big| \cdots \big| A_r(z) \right). \]
    }

    In the case where $A_r(z)=0$, we get
    \[
    F'(z)=\det\left(A(z)\overset{r}{\leftarrow} A'(z)\right)=\det\left(A_1(z)\big|\cdots\big|A_{r-1}(z)\big|A'_r(z)\right).
    \]
    In conclusion, the right-hand side of \eqref{condexps} is
    \[
    \mathbb E\left(\left|\det\left(A(z)\overset{r}{\leftarrow} A'(z)\right)\right|^2\log\left|\det\left(A(z)\overset{r}{\leftarrow} A'(z)\right)\right| \big| A_r(z)=0 \right).
    \]

    Let us do a Gaussian regression, as in Proposition 1 of Wschebor \cite{W}. 
    Write $A'_r(z)=\eta(z)+\beta A_r(z)$ such that $\eta(z)$ is a Gaussian random vector independent of $A_r(z)$.

    It is easy to check that $\mathbb E(\eta(z)\eta(z)^*)=d(1+|z|^2)^{d-2}\Id_r=\sigma_\eta ^2\Id_r$ and we have that the last conditional expectation is equal to,
    \[
    \mathbb E\left(\left|\det\left(A_1(z)\big|\cdots\big|A_{r-1}(z)\big|\eta(z)\right)\right|^2\log\left|\det\left(A_1(z)\big|\cdots\big|A_{r-1}(z)\big|\eta(z)\right)\right|\right).
    \]

    Lastly, the last expectation is equal to 
    \[
    \mathbb E\left(|\det AM|^2\log|\det AM|\right),
    \]
    where $A$ is a $r\times r$ with i.i.d. complex standard Gaussian entries, and $M$ is the diagonal matrix whose first $(r-1)$ entries are $(1+|z|^2)$ the last one is $\sigma_\eta$.

    The result follows from applying Corollary \ref{corexpdet}.
\end{proof}

\medskip

Now that we have computed successfully the density $P_{F(z)}(0)$ and the conditional expectation $\mathbb E\left(|F'(z)|^2\log|F'(z)| \bigg| F(z)=0\right)$, let us compute the remaining term of \eqref{tres_terminos}.

\begin{prop} In the conditions of Theorem \ref{main},
$$ 
\mathbb E \left(\sum_{i=1}^{N}\log|F'(z_i)|\right)=\frac{N}{2}\bigg(N+\log d + (r+1)\psi(r+2)-r-\psi(2)-2\bigg).
$$
\end{prop}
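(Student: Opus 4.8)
The plan is to substitute the two ingredients already in hand---the density $P_{F(z)}(0)$ from Proposition \ref{densFZ} and the conditional expectation from Proposition \ref{condexp}---directly into the weighted Kac--Rice formula \eqref{eq:kacrice}. Forming the product of the two factors, the Gamma factors $\Gamma(r)$ cancel, and the powers of $(1+|z|^2)$ combine as $(1+|z|^2)^{N-2}/(1+|z|^2)^{N}=(1+|z|^2)^{-2}$. This collapses the integrand to
$$
\frac{N}{2\pi(1+|z|^2)^2}\Bigl((r+1)\psi(r+2)-r-\psi(2)+\log\bigl(d(1+|z|^2)^{N-2}\bigr)\Bigr),
$$
so the whole problem reduces to integrating this explicit function over $\C$.

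Next I would split the logarithm as $\log\bigl(d(1+|z|^2)^{N-2}\bigr)=\log d+(N-2)\log(1+|z|^2)$, which separates the integral into a constant multiple of $\int_\C (1+|z|^2)^{-2}\,dz$ and a multiple of $\int_\C \log(1+|z|^2)\,(1+|z|^2)^{-2}\,dz$. Both integrals are elementary in polar coordinates: writing $z=\rho e^{i\theta}$ and substituting $u=1+\rho^2$ gives $\int_\C (1+|z|^2)^{-2}\,dz=\pi$, and after a single integration by parts one likewise finds $\int_\C \log(1+|z|^2)\,(1+|z|^2)^{-2}\,dz=\pi$. Note that the second value is consistent with the computation already performed in the first proposition of Section \ref{computations}.

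Collecting the three contributions, the factor $\pi$ cancels against the $1/\pi$ prefactor, and the surviving terms assemble into
$$
\frac{N}{2}\Bigl(N-2+\log d+(r+1)\psi(r+2)-r-\psi(2)\Bigr),
$$
which is exactly the claimed expression after reordering the summands. Since the two preceding propositions have already carried out the genuinely nontrivial work (the Gaussian regression and the normal-Jacobian computation), there is no real obstacle in this final step: the only points requiring care are the bookkeeping of the normalizing constants in the product and the correct evaluation of the two elementary integrals, particularly the integration by parts in the logarithmic one.
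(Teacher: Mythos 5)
Your proposal is correct and follows exactly the paper's route: substitute Propositions \ref{densFZ} and \ref{condexp} into the weighted Kac--Rice formula \eqref{eq:kacrice}, split the logarithm into $\log d + (N-2)\log(1+|z|^2)$, and evaluate the two resulting integrals in polar coordinates (both equal to $\pi$), yielding $\frac{N}{2}\bigl(N-2+\log d+(r+1)\psi(r+2)-r-\psi(2)\bigr)$. The paper compresses the final integration into one line, whereas you carry it out explicitly, but the argument is the same.
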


\begin{proof}
    Applying the Kac-Rice formula \eqref{eq:kacrice}, we get that the left-hand side is equal to
    $$
    \int_{z\in\C}\mathbb E\left(|F'(z)|^2\log|F'(z)| \bigg| F(z)=0\right)P_{F(z)}(0)dz.
    $$
    Furthermore, applying Proposition \ref{condexp} and Proposition \ref{densFZ} we have that it is equal to
    $$
    \frac{N}{2\pi}\left(\int_{z\in\C}\frac{(r+1)\psi(r+2)-r-\psi(2)+\log d}{(1+|z|^2)^2}dz+(N-2)\int_{z\in\C}\frac{\log (1+|z|^2)}{(1+|z|^2)^2}dz\right).
    $$
    Then, the result follows from changing to polar coordinates.
\end{proof}

\subsection{Proof of Main Theorem}\label{proofmain}

Now, we will combine everything done up to now, to prove Theorem \ref{main}.

\medskip
Let $F(z)$ be the complex polynomial of degree $N=rd$, defined as
\[
F(z)=\det\left(\sum_{i=0}^d G_i{d\choose i}^{1/2}z^i\right),
\]
where $G_i$ are $r \times r$ complex random matrices with i.i.d. standard Gaussian entries $\mathcal{N}_\C(0,1)$.

We want to compute $\mathbb E(V(\hat z_1,\cdots,\hat z_N))$, the expected value  of the logarithmic energy of the projection of the roots of $F(z)$ onto the Riemmann sphere. 

\medskip 

Recall \eqref{tres_terminos}, we have that $\mathbb E(V(\hat z_1,\cdots,\hat z_N))$ is equal to
\[
(N-1)\mathbb E\left(\sum_{i=1}^{N}\log\sqrt{1+|z_i|^2}\right)-\frac{1}{2}\mathbb E\left(\sum_{i=1}^{N}\log|F'(z_i)|\right)+\frac{N}{2}\mathbb E(\log|\det(G_d)|),
\]
which, thanks to the computations of Section \ref{computations}, is in turn equal to
\[
\frac{(N-1)N}{2}-\frac{N}{4}\bigg(N+\log d + (r+1)\psi(r+2)-r-\psi(2)-2\bigg)+\frac{Nr(\psi(r+1)-1)}{4}.
\]

In conclusion,
\[
\mathbb E(V(\hat z_1,\cdots,\hat z_N))=\frac{N^2}{4}-\frac{N\log d}{4}-\frac{N}{4}\bigg(1+\psi(r+1)-\psi(2)\bigg).
\]

The result in $\S^2$ follows by substracting $\frac{N(N-1)}{2}\ln 2$, see \eqref{exps2}.

\qed

\section{Discussion and experimental examples}\label{sec:discussion}

As observed above, the PEVP ensemble contains the spherical ensemble \cite{alishahi2015spherical} and the random polynomial roots \cite{ABS} as particular examples. Remember that the constant for the linear term in the energy was $\left(\frac{\ln 2}{2} -\frac{\gamma}{4} \right) \approx 0.2022\ldots$ for the spherical ensemble, and $-\frac{\kappa}{2} \approx 0.096\ldots$ for the random polynomial roots. This means that the random polynomial roots are better distributed than the spherical ensemble in terms of the logarithmic energy. 

In this section, we study the dependence on $d$ of the logarithmic energy of the PEVP ensemble in order to see if for an intermediate $(d,r)$, the logarithmic energy lies between the two extremes described above. From Theorem \ref{main}, we have that the expectation of the logarithmic energy is
$$
\E(V(x_1,\cdots,x_N))=\frac{\kappa}{2}N^2-\frac{N\log d}{4}-\frac{N}{4}\bigg(1+\psi(r+1)-\psi(2) -2\ln 2\bigg).
$$

Now, it is well known that
\begin{equation}\label{psi_sum}
    \psi(r+1) = \frac{\Gamma'(r+1)}{\Gamma(r+1)} = -\gamma+ \sum_{j=1}^{r}\frac{1}{j},
\end{equation}
and using the Euler-Maclaurin formula as in \cite{alishahi2015spherical} we have
\begin{equation}\label{euler_maclaurin}
    \sum_{j=1}^{r}\frac{1}{j} = \ln r + \gamma + \frac{1}{2r} - \frac{1}{12r^2} + O\left(\frac{1}{r^4} \right). 
\end{equation}
Combining \eqref{psi_sum} and \eqref{euler_maclaurin} we have
\begin{equation}\label{psi_expression}
    \psi(r+1) = \ln r + \frac{1}{2r} - \frac{1}{12r^2} + O\left(\frac{1}{r^4} \right). 
\end{equation}
Using \eqref{psi_expression} and the equality $\gamma = 1-\psi(2)$ in the logarithmic energy expression, we obtain
\begin{eqnarray*}
\E(V(x_1,\cdots,x_N))&=
& \frac{\kappa}{2}N^2-\frac{N\ln N}{4}+N\left(\frac{\ln 2}{2}-\frac{\gamma}{4}\right) -\frac{d}{8} + \frac{d^2}{48N} + N O\left(\frac{1}{r^4} \right).
\end{eqnarray*}

Let us make two observations. Firstly, for $d=1$ we recover the expression for the logarithmic energy of the spherical ensemble. Secondly, for a given $N$, the logarithmic energy of the PEVP ensemble decreases linearly with $d$. Indeed, the term $\frac{d^2}{48N}$ is comparable with $\frac{d}{8}$ only for $d \geq 6N$, which is obviously not the case if $d$ is a factor of $N$.

In what follows, we present some experimental results which illustrate this dependence with $d$, and the comparison of the computed expected value of the logarithmic energy with the empirical values.

We take two values for the number of points, $N= 60$ and $N=120$, which have several divisors, namely $12$ and $16$, respectively. For each $N$ and for each pair $(d,r)$ such that $N=dr$, we sort random matrices and solve numerically the corresponding PEVP. The obtained eigenvalues are projected to $\S^2$, and we compute the resulting logarithmic energy. This process is repeated $100.000$ times for each pair $(d,n)$. 

In Figure \ref{fig:todojunto}, we present a violin plot (a kernel density estimation) for each $d$, on top of the expected value computed in Theorem \ref{main}. Notice that for $d=N$, the logarithmic energy corresponds to the roots of random polynomials \cite{ABS}.

In Figure \ref{fig:dependencia_en_d}, we show, with the same experimental data, the linear dependence of the logarithmic energy on $d$.

In Figure \ref{fig:diferencias_promedios}, we show the difference between the expected value computed in Theorem \ref{main} and the density estimated in the $100.000$ problem samples for each $d$. Observe that the distributions are centered at zero, and the variance seems to decrease with $d$.

\begin{figure}[h!]
    \centering
    \includegraphics[width=0.49\linewidth]{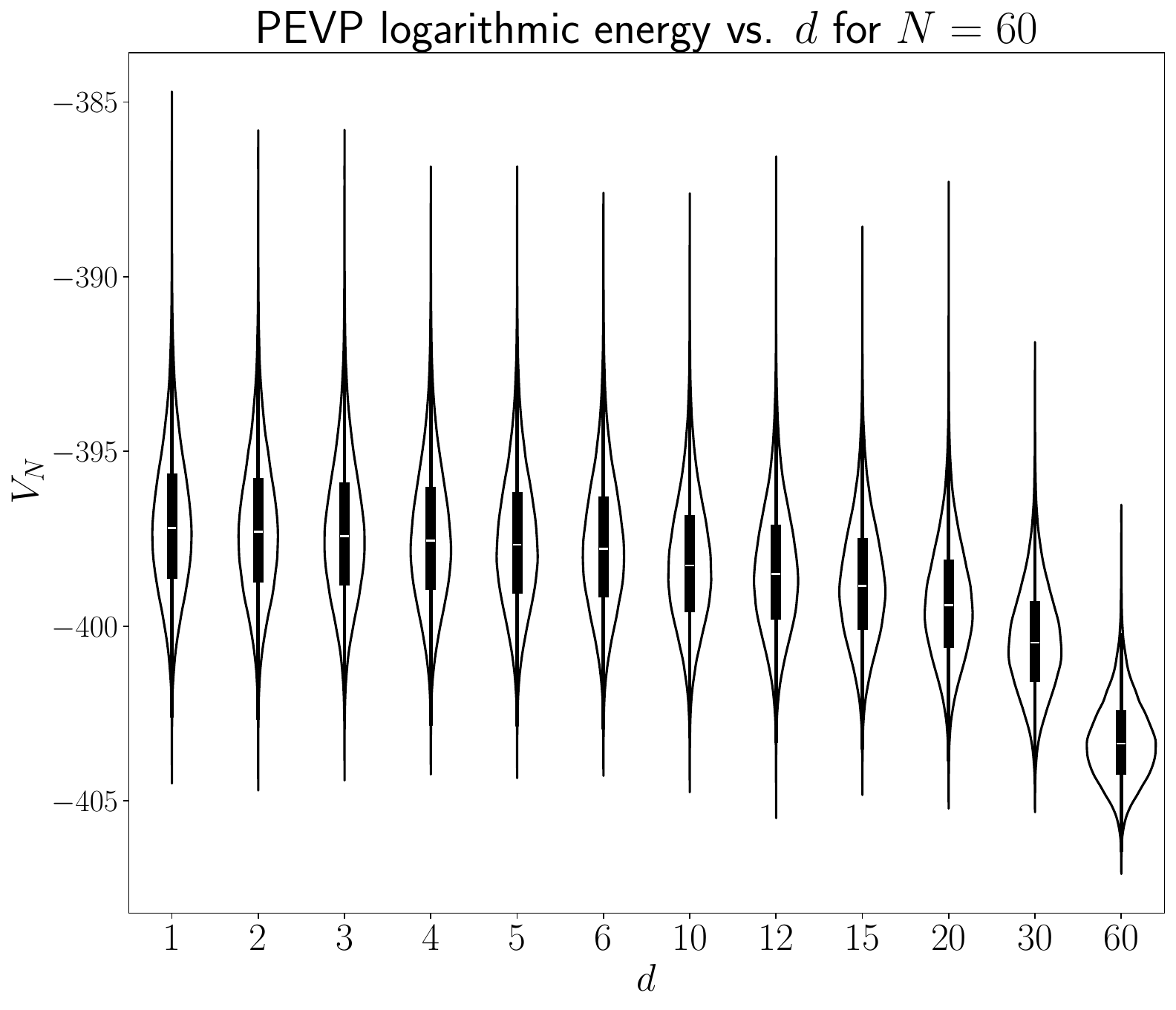}
    \includegraphics[width=0.49\linewidth]{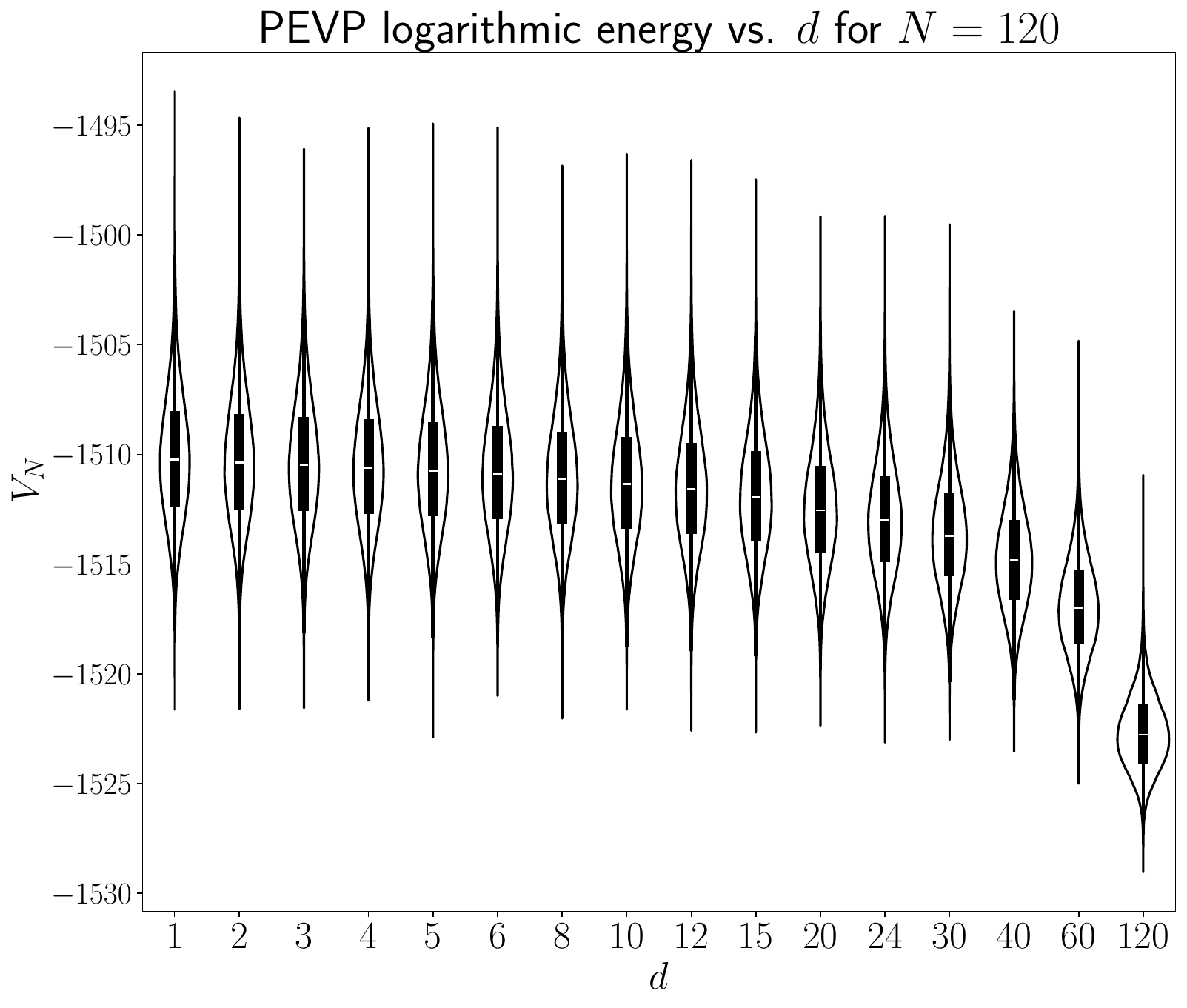}
    \caption{Empirical logarithmic energies for PEVP ensambles. The violin plots are computed using $100.000$ repetitions for each pair $(r,d)$.
    }
    \label{fig:todojunto}
\end{figure}
\begin{figure}[h!]
    \centering
    \includegraphics[width=1\linewidth]{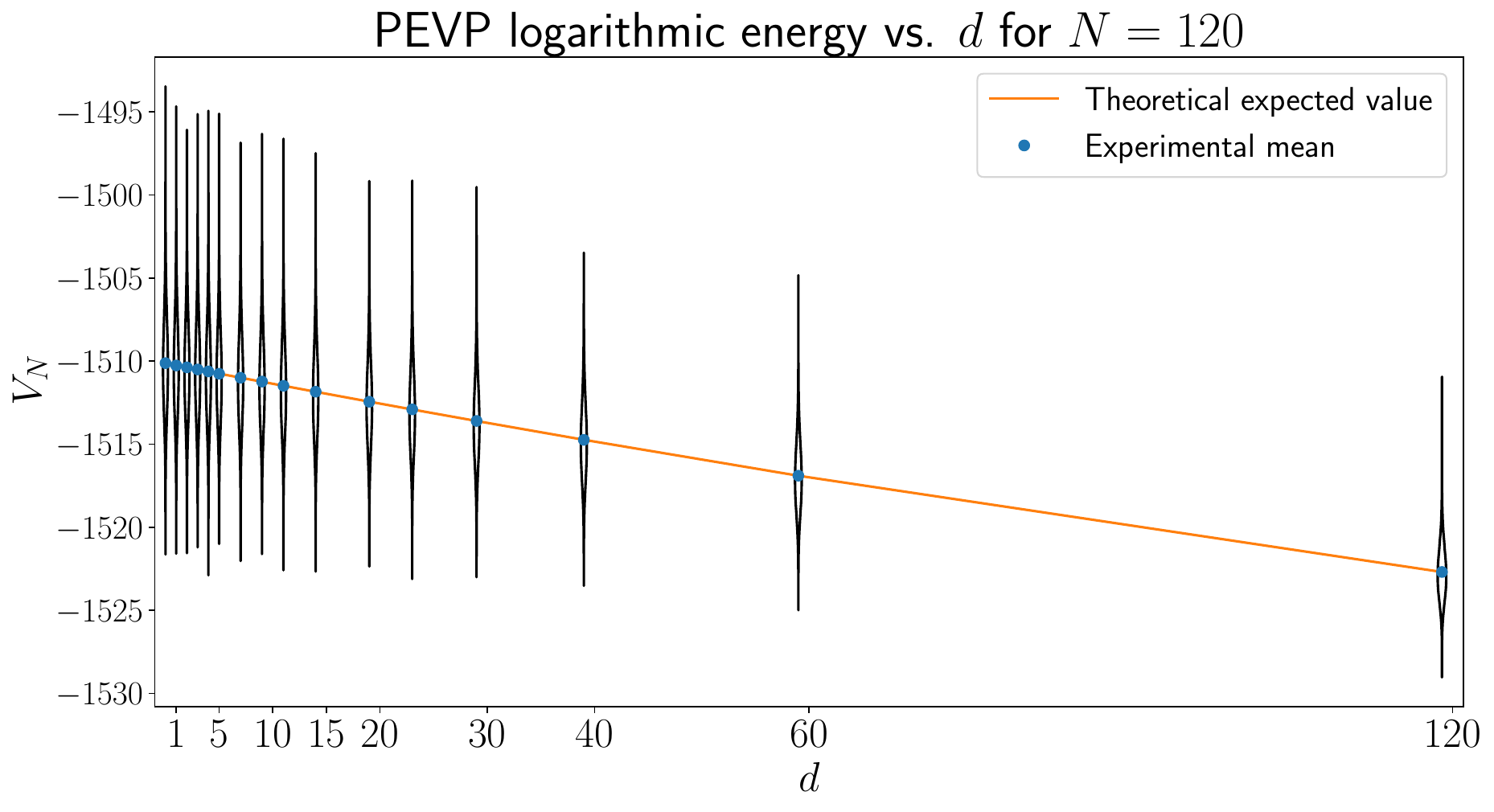}
    \caption{Dependence of the logarithmic energy on $d$. The empirical results are the same as in Fig. \ref{fig:todojunto}, but with $d$ now correctly scaled on the x-axis, and the expected value from Theorem \ref{main} included.}
    \label{fig:dependencia_en_d}
\end{figure}
\begin{figure}[h!]
    \centering
    \includegraphics[width=1\linewidth]{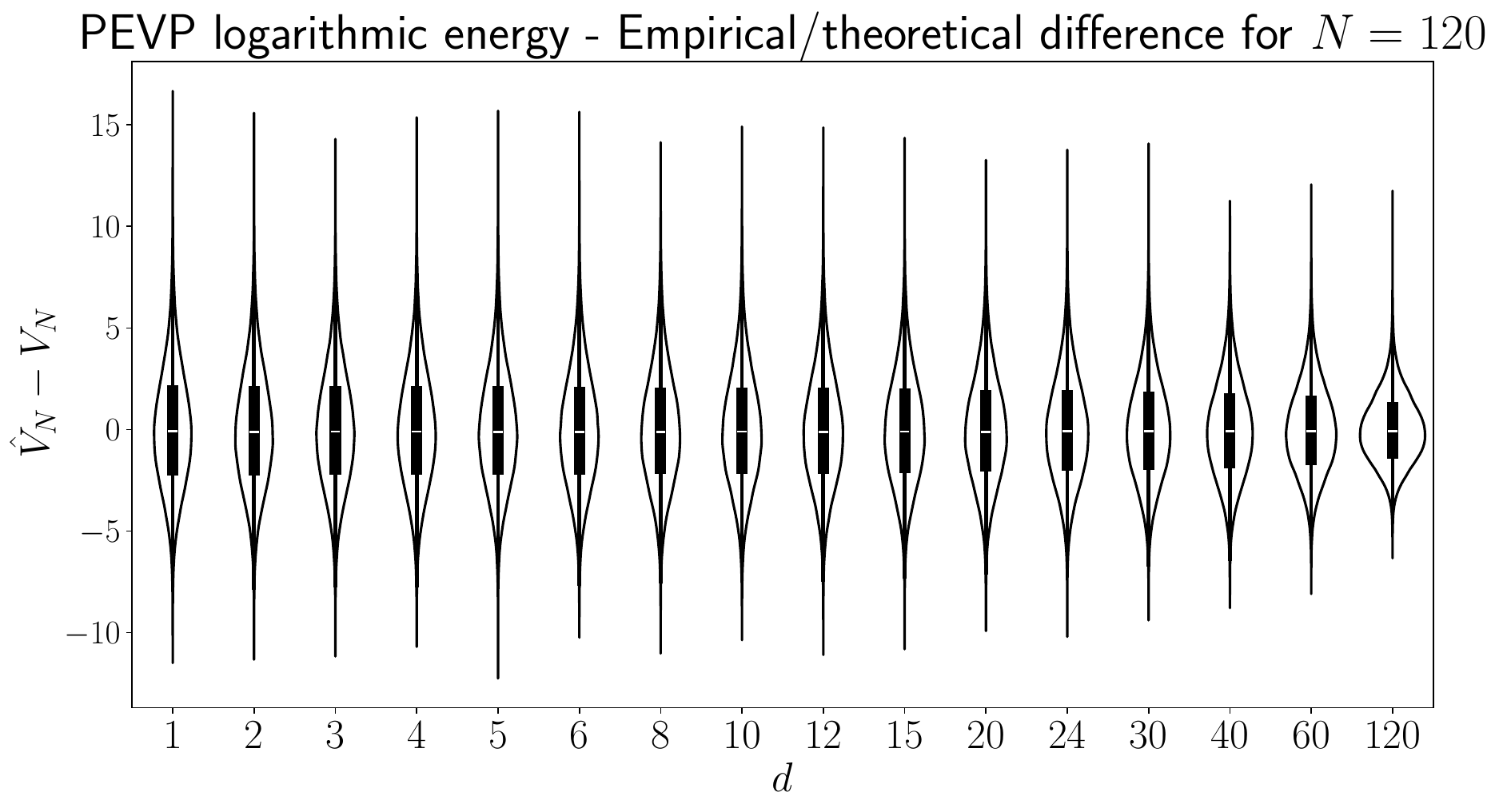}
    \caption{Difference between the empirical results and the expected value of Theorem \ref{main}. Notice how the difference is centered at zero for all $d$.}
    \label{fig:diferencias_promedios}
\end{figure}

\newpage
\bibliographystyle{plain}
\bibliography{biblio}

\begin{thebibliography}{10}

\bibitem{alishahi2015spherical}
Kasra Alishahi and Mohammadsadegh Zamani.
\newblock The spherical ensemble and uniform distribution of points on the
  sphere.
\newblock {\em Electron. J. Probab.}, 20:no. 23, 27, 2015.

\bibitem{AAL}
Diego Armentano, Jean-Marc Azaïs, and José~Rafael León.
\newblock On a general {Kac-Rice} formula for the measure of a level set.
\newblock {\em Annals of Applied Probability (Accepted),
  https://arxiv.org/abs/2304.07424}, 2025.

\bibitem{ABS}
Diego Armentano, Carlos Beltr\'{a}n, and Michael Shub.
\newblock Minimizing the discrete logarithmic energy on the sphere: the role of
  random polynomials.
\newblock {\em Trans. Amer. Math. Soc.}, 363(6):2955--2965, 2011.

\bibitem{arnold}
Vladimir~Igorevich Arnold, Sabir~Medgidovich Gusein-Zade, and Alexander
  Varchenko.
\newblock {\em Singularities of differentiable maps. {V}olume 1}.
\newblock Modern Birkh\"auser Classics. Birkh\"auser/Springer, New York, 2012.
\newblock Classification of critical points, caustics and wave fronts,
  Translated from the Russian by Ian Porteous based on a previous translation
  by Mark Reynolds, Reprint of the 1985 edition.

\bibitem{AW}
Jean-Marc Aza\"{\i}s and Mario Wschebor.
\newblock On the roots of a random system of equations. {T}he theorem on {S}hub
  and {S}male and some extensions.
\newblock {\em Found. Comput. Math.}, 5(2):125--144, 2005.

\bibitem{beltran2018diamond}
Carlos Beltr\'{a}n and Uju\'{e} Etayo.
\newblock The diamond ensemble: a constructive set of spherical points with
  small logarithmic energy.
\newblock {\em J. Complexity}, 59:101471, 22, 2020.

\bibitem{carlos_fatima}
Carlos Beltr\'an and F\'atima Lizarte.
\newblock A lower bound for the logarithmic energy on {$\Bbb{S}^2$} and for the
  green energy on {$\Bbb{S}^n$}.
\newblock {\em Constr. Approx.}, 58(3):565--587, 2023.

\bibitem{betermin2018renormalized}
Laurent B\'{e}termin and Etienne Sandier.
\newblock Renormalized energy and asymptotic expansion of optimal logarithmic
  energy on the sphere.
\newblock {\em Constr. Approx.}, 47(1):39--74, 2018.

\bibitem{BCSS}
Lenore Blum, Felipe Cucker, Michael Shub, and Steve Smale.
\newblock {\em Complexity and real computation}.
\newblock Springer-Verlag, New York, 1998.
\newblock With a foreword by Richard M. Karp.

\bibitem{brauchart2012}
Johann Brauchart, {Douglas} Hardin, and {Edward} Saff.
\newblock The next-order term for optimal {R}iesz and logarithmic energy
  asymptotics on the sphere.
\newblock In {\em Recent advances in orthogonal polynomials, special functions,
  and their applications}, volume 578 of {\em Contemp. Math.}, pages 31--61.
  Amer. Math. Soc., Providence, RI, 2012.

\bibitem{Gradshteyn}
Izrail~Solomonovich Gradshteyn and Iosif~Moiseevich Ryzhik.
\newblock {\em Table of integrals, series, and products}.
\newblock Elsevier/Academic Press, Amsterdam, eighth edition, 2015.
\newblock Translated from the Russian, Translation edited and with a preface by
  Daniel Zwillinger and Victor Moll.

\bibitem{krishna}
J.~Ben Hough, Manjunath Krishnapur, Yuval Peres, and B\'{a}lint Vir\'{a}g.
\newblock {\em Zeros of {G}aussian analytic functions and determinantal point
  processes}, volume~51 of {\em University Lecture Series}.
\newblock American Mathematical Society, Providence, RI, 2009.

\bibitem{krishnaphd}
Manjunath~Ramanatha Krishnapur.
\newblock {\em Zeros of random analytic functions}.
\newblock ProQuest LLC, Ann Arbor, MI, 2006.
\newblock Thesis (Ph.D.)--University of California, Berkeley.

\bibitem{Lauritsen}
Asbj\o rn B\ae~kgaard Lauritsen.
\newblock Floating {W}igner crystal and periodic jellium configurations.
\newblock {\em J. Math. Phys.}, 62(8):Paper No. 083305, 17, 2021.

\bibitem{Fluctuations}
Marcus Michelen and Oren Yakir.
\newblock Fluctuations in the logarithmic energy for zeros of random
  polynomials on the sphere.
\newblock {\em Probability Theory and Related Fields}, pages 1--58, 10 2024.

\bibitem{B3}
Michael Shub and Steve Smale.
\newblock Complexity of {B}ezout's theorem. {III}. {C}ondition number and
  packing.
\newblock volume~9, pages 4--14. 1993.
\newblock Festschrift for Joseph F. Traub, Part I.

\bibitem{smale_problems}
Steve Smale.
\newblock Mathematical problems for the next century.
\newblock In {\em Mathematics: frontiers and perspectives}, pages 271--294.
  Amer. Math. Soc., Providence, RI, 2000.

\bibitem{W}
Mario Wschebor.
\newblock On the {K}ostlan-{S}hub-{S}male model for random polynomial systems.
  {V}ariance of the number of roots.
\newblock {\em J. Complexity}, 21(6):773--789, 2005.

\end{thebibliography}



\end{document}